\theoremstyle{plain}
\newtheorem{theorem}{Theorem}[section]
\newtheorem{lemma}[theorem]{Lemma}
\newtheorem{proposition}[theorem]{Proposition}
\numberwithin{equation}{section}
\DeclareMathOperator*{\dist}{dist}
\begin{document}

\begin{frontmatter}

\title{Doubling Argument of the Hessian Estimate for the Special Lagrangian Equation on General Phases with Constraints}

\author[inst1]{Cheuk Yan Fung\corref{cor1}}
\cortext[cor1]{Corresponding author}
\ead{pifung@umd.edu}

\affiliation[inst1]{
    organization={Department of Mathematics, University of Maryland},
    addressline={4176 Campus Dr}, 
    city={College Park},
    postcode={MD 20742},
    country={United States}
}

\begin{abstract}
In this paper, we establish a doubling argument to obtain Hessian estimates for the special Lagrangian equation under general phase with constraints. In particular, our approach does not rely on the Michael--Simon mean value inequality. As an intermediate step, we also establish Alexandrov-type theorems, which may be of independent interest.
\end{abstract}

\begin{keyword}
Special Lagrangian equation \sep Hessian estimates \sep Doubling inequality \sep Alexandrov theorem 
\end{keyword}

\end{frontmatter}

\section{Introduction}
In this paper, we prove the Hessian estimate for the special Lagrangian equation on general phases with constraints, first introduced by Zhou~\cite{Zhou2021}. Our result extends Shankar's argument~\cite{Shankar2024HessianEF} which also employs the doubling method. 

Let $u$ be a smooth solution of the special Lagrangian equation in dimension $n\geq 3$ with a phase constant $\theta \geq 0$ given by
\begin{equation}
\label{SLE}
    \arctan \lambda_1 + \arctan \lambda_2 + \dots + \arctan \lambda_n = \theta,
\end{equation}
where $\lambda_i $ are the eigenvalues of $D^2u$. 

Hessian estimates for the special Lagrangian equation have been extensively studied and established in various contexts. In dimension three, Warren--Yuan~\cite{WarrenYuan2009} obtained the estimate for the critical phase, and later Warren--Yuan \cite{a359f720-81a7-3f4b-b3f0-6e19654fb57a} further extended the results to both the critical and supercritical phases. Wang--Yuan~\cite{Wang2014} subsequently developed the estimates for the critical and supercritical phases for general dimensions $n\geq 3$. Chen--Warren--Yuan~\cite{Chen2009} proved the estimate for convex solutions. Warren--Yuan~\cite{WarrenYuan2008} also derived the Hessian estimate under the assumptions
\[3+(1-\varepsilon)\lambda_i^2+2\lambda_i\lambda_j \geq 0,\text{ } 1\leq i,j\leq n,\] for some $\varepsilon >0$, and $|Du(x)|\leq \delta(n) |x|$. Ding~\cite{Ding2023} later established the Hessian estimate with the assumption 
\[(3+\varepsilon)+(1+\varepsilon)\lambda_i^2+2\lambda_i\lambda_j \geq 0, \text{ } 1\leq i,j\leq n,\] for some $\varepsilon >0$. Finally, Zhou~\cite{Zhou2021} obtained the Hessian estimate on general phases with constraints. Counterexamples below the critical phase were constructed by Mooney--Savin~\cite{MooneySavin2024}, Nadirashvili--Vl{\u{a}}du{\c{t}}~\cite{NadirashviliVladut2010}, and Wang--Yuan~\cite{WangYuan2013}. 

A natural question is whether we can find a pointwise argument for the a priori Hessian estimates of the special Lagrangian equation. In fact, Chen--Warren--Yuan~\cite{Chen2009}, Wang--Yuan~\cite{Wang2014}, and Zhou~\cite{Zhou2021} established integral proofs for the Hessian estimates of the special Lagrangian equation, relying on the Michael--Simon mean value inequality~\cite{MichaelSimon1973} and the Jacobi inequality. Recently, Shankar~\cite{Shankar2024HessianEF} developed a pointwise, doubling argument to obtain Hessian estimates for the special Lagrangian equation in various settings. In particular, the argument applies to solutions with the critical and supercritical phases, and to convex solutions \cite{Chen2009, Wang2014}. He made use of a Korevaar-type pointwise calculation that avoids employing the Michael--Simon mean value inequality. In this article, we adapt the pointwise, doubling method to the more general setting introduced by Zhou~\cite{Zhou2021}, thereby extending the approach to these constraints.

The study of Hessian estimates for the special Lagrangian equation is intimately related to that of the interior gradient estimate for the minimal surface system. Specifically, the Hessian estimate for the special Lagrangian equation can be viewed as a particular case of the gradient estimate for minimal surface systems, when the minimal surface is represented as a gradient graph. Bombieri–De Giorgi–Miranda \cite{Bombieri1969} established an a priori gradient estimate for graphical minimal hypersurfaces, which was later simplified by Trudinger \cite{Trudinger1972}. The key ingredients are the Jacobi inequality $\Delta a \geq |\nabla a|^2$ and the Michael-Simon mean value inequality. Korevaar \cite{Korevaar1987} subsequently established a pointwise argument for the gradient estimate without using the Michael-Simon mean-value inequality. A natural question is whether such an argument can be adapted for higher codimension minimal surfaces. An example is that Dimler \cite{Dimler2023} obtained the gradient estimate for classical solutions of the minimal surface system assuming the area decreasing condition and that all but one component have small $L^\infty$ norm. 

The doubling method provides an alternative approach to establishing Hessian estimates without using the Michael-Simon mean value inequality and can also be used to derive interior regularity results. The first step is to prove an Alexandrov-type theorem. Next, the uniform norm of the solution is flattened so that Savin's small perturbation theorem (\cite{Savin2007}, Theorem 1.3) can be applied to obtain Hessian estimates in a small ball. Finally, by employing the Jacobi inequality, one can derive the doubling inequality, which allows the Hessian estimate in a larger ball to be controlled by the estimate in the smaller ball.

Several works have constructed doubling inequalities to establish Hessian estimates and interior regularity results. Qiu \cite{Qiu2024} derived the Hessian estimate of the sigma-2 equation in dimension $3$. Shankar-Yuan \cite{10.4007/annals.2025.201.2.4} established the Hessian estimate for the sigma-2 equation when $n=4$. Shankar-Yuan \cite{ShankarYuan2024} proved the interior regularity of strictly convex solutions of the
Monge-Amp$\grave{e}$re equation. As discussed above, Shankar~\cite{Shankar2024HessianEF} extended this doubling framework to the special Lagrangian equation in several settings. In this article, we further adapt the doubling method to handle the general constraints introduced by Zhou~\cite{Zhou2021}.

In related work, Trudinger \cite{Trudinger1980} established a doubling inequality in the context of the Harnack inequality for weak solutions $u \in W^{2,n}$ of general second order elliptic quasilinear equations. Caffarelli-Wang \cite{CaffarelliWang1993} made use of a Harnack inequality approach to give an alternative proof of the $C^{1,\alpha}$ regularity of Lipschitz solutions to 
\[F(v,II) = 0,\]
where $II$ is the second fundamental form of a hypersurface $S$ in $\mathbb{R}^{n+1}$ and $v$ is its normal. 

We say that $\lambda(D^2u) \subset A$ if the eigenvalues $(\lambda_1,\dots, \lambda_n)$ of $D^2u$ belong to $A \subset \mathbb{R}^n$. 
 Recall that for $\lambda = (\lambda_1, \dots, \lambda_n)$, the $k$-th elementary symmetric polynomial is given by $\sigma_k(\lambda) = \sum_{1 \le i_1 < \dots < i_k \le n} \lambda_{i_1} \dots \lambda_{i_k}$ for $1 \le k \le n$, with $\sigma_0 \equiv 1$.

Let 
\begin{equation}
\label{cone}
    \Gamma_{n-1}^c = \{\Lambda \in \mathbb{R}^n| \text{ }\sigma_{n-1} > c\lambda_2 \cdot \lambda_3\cdot ... \cdot \lambda_n, \text{ } \lambda_{n-1} >0 \}, 
\end{equation}
where $(\lambda_1,\dots,\lambda_n)$ is an arrangement of $\Lambda$ such that $\lambda_1 \geq \lambda_2 \geq \dots \geq \lambda_n$ and $c$ is a constant. We denote the closure of the cone $\Gamma_{n-1}^c$ by $\overline{\Gamma_{n-1}^c}$. From now on, we will always assume $\lambda_1 \geq \lambda_2 \geq  \dots \geq \lambda_n$. 

The following theorems were proved by Zhou \cite{Zhou2021} using the Michael-Simon mean value inequality. We establish a compactness argument without using it. 
\begin{theorem}
\label{thm1.1}
    Let $n\geq 3$ and let $u$ be a smooth solution to the special Lagrangian equation (\ref{SLE}) in $B_2(0)$ with the phase constant $\theta \geq 0$. Suppose that $\lambda(D^2 u) \subset  \overline{\Gamma^{(n-2)/2}_{n-1}}$, then  
    \begin{equation}
        |D^2u(0)| \leq C(n, \|u\|_{C^{0,1}(B_1(0))},\theta).
    \end{equation}
\end{theorem}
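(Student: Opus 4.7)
The plan is to argue by contradiction along the doubling pipeline described in the introduction, avoiding any use of the Michael--Simon mean value inequality. Suppose the estimate fails: there is a sequence of smooth solutions $u_k$ of (\ref{SLE}) on $B_2(0)$ with phases $\theta_k \geq 0$, with $\lambda(D^2 u_k) \subset \overline{\Gamma^{\frac{1}{2}(n-2)}_{n-1}}$ and $\|u_k\|_{C^{0,1}(B_1(0))} \leq K$, but $M_k := |D^2 u_k(0)| \to \infty$. Passing to a subsequence and after a standard point-selection procedure, I can assume that $M_k$ is essentially the maximum of $|D^2 u_k|$ on a ball shrinking in a controlled way around $0$.

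The first ingredient is an Alexandrov-type theorem for solutions with $\lambda(D^2 u) \subset \overline{\Gamma^{\frac{1}{2}(n-2)}_{n-1}}$. Here the cone condition $\sigma_{n-1} \geq \tfrac{n-2}{2}\lambda_2\cdots\lambda_n$ with $\lambda_{n-1}\geq 0$ is the quantitative analogue of semi-convexity that was shown by Zhou to be compatible with the Jacobi inequality $\Delta_g b \geq c(n)|\nabla_g b|^2$ for $b=\log\sqrt{1+\lambda_{\max}^2}$ (or a convex function of the slope) on the Lagrangian graph. In place of the Michael--Simon approach, I would use a compactness argument: test the divergence form of the Jacobi inequality against suitable cutoffs together with the structural inequality coming from the cone, and extract from the $C^{0,1}$ bound on $u$ a uniform $L^p$ estimate on $b$ and its gradient on, say, $B_{3/2}(0)$.

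Next, I would flatten the blow-up sequence. Rescale $\tilde u_k(x) = M_k^{2}\, u_k(x/M_k)$ (subtracting affine parts so that $\tilde u_k(0) = 0$ and $D\tilde u_k(0) = 0$), so that $\tilde u_k$ solves (\ref{SLE}) on larger and larger balls with $|D^2 \tilde u_k(0)|=1$ and Lipschitz constant going to zero. After subtracting the quadratic polynomial $\tfrac12 \langle D^2 \tilde u_k(0) x, x\rangle$, the remainder becomes a small perturbation of the linearized equation about a constant Hessian; Savin's small perturbation theorem (\cite{Savin2007}, Theorem 1.3) then yields a uniform $C^{2,\alpha}$ bound on $\tilde u_k$ in a fixed small ball around $0$, hence Hessian control in a correspondingly small ball around $0$ for the original $u_k$. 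The doubling step then uses the Jacobi inequality again: via a Moser-type iteration with cutoffs on the Lagrangian graph, the supremum of $b$ on $B_{2r}$ is controlled by its supremum on $B_r$ plus a constant depending only on $n$, $K$, $\theta$ and the Alexandrov-type integral bound. Iterating the doubling inequality propagates the small-ball bound at $0$ out to $B_1(0)$, contradicting $M_k \to \infty$.

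The main obstacle will be the Alexandrov-type step, since we must replace the Michael--Simon inequality with a self-contained compactness argument valid on the possibly non-minimal Lagrangian graph defined by $u$; producing a uniform integral bound for $b$ requires both the structural inequality implicit in the cone $\overline{\Gamma^{\frac{1}{2}(n-2)}_{n-1}}$ and a careful choice of test function so that the boundary terms are absorbed into $\|u\|_{C^{0,1}}$. A secondary technical point will be ensuring that the rescaling preserves the cone membership (which it does, since the cone is scaling invariant) and that the limit of the flattened sequence remains a classical solution to which Savin's theorem applies.
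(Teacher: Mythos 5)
Your high-level skeleton (compactness and contradiction, an Alexandrov-type step, Savin's small perturbation theorem, a doubling inequality) matches the paper's outline, but the concrete steps contain genuine gaps. The flattening step as written cannot work: the rescaling $\tilde u_k(x) = M_k^2\, u_k(x/M_k)$ satisfies $D^2\tilde u_k(x) = D^2u_k(x/M_k)$, so it neither normalizes $|D^2\tilde u_k(0)|$ to $1$ nor makes the Lipschitz constant small (it multiplies $Du_k$ by $M_k$), and no smallness of $u_k$ minus a quadratic is available at the blow-up point $0$; indeed, a uniform $C^{2,\alpha}$ bound in any fixed ball around $0$ would by itself contradict $M_k\to\infty$, which signals that Savin's theorem is being invoked at the wrong point. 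In the correct scheme (Section \ref{section main theorem proof}), Savin is applied not at $0$ but at a point $y$ where the uniform limit $u$ of the $u_k$ is twice differentiable, using the $o(r^2)$ quadratic approximation furnished by Proposition \ref{partial_regularity} together with uniform convergence $u_k\to u$ to verify the smallness hypothesis after rescaling a small ball $B_r(y)$; the doubling inequality of Proposition \ref{doubling1} then transfers the resulting Hessian bound from $B_r(y)$ to $B_{1/4}(y)\ni 0$, contradicting $|D^2u_k(0)|\to\infty$. Your "standard point-selection procedure" plays no role in this argument and is not justified.

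The two remaining ingredients are also not supplied by your sketch. For the Alexandrov-type step, a uniform $L^p$ bound on $b$ and its gradient obtained by testing the Jacobi inequality is not what is needed; the required statement is that the merely Lipschitz limit $u$ is twice differentiable almost everywhere, and the paper derives this from three inputs you do not establish: the interior gradient estimate (Lemma \ref{grad_est}, adapted from Yuan), the resulting weighted Lipschitz estimate, and, crucially, the existence of a vector-valued Radon measure Hessian $[D^2u]=[\mu^{ij}]$, proved by checking $2$-convexity of the $u_k$ when $n\geq 4$ (Lemma \ref{case1}) and by a dual-cone argument in the spirit of Chaudhuri--Trudinger when $n=3$ (Lemma \ref{case2}); this is the main new content of the paper and is absent from your proposal. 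Finally, proving the doubling inequality by "Moser-type iteration with cutoffs on the Lagrangian graph" would require a Sobolev inequality on the graph, i.e.\ precisely the Michael--Simon inequality the theorem is meant to avoid; the paper instead proves doubling by a pointwise maximum-principle argument for $\eta\, b_1$, with a Korevaar-type exponential cutoff built from the Guan--Qiu radial derivative $(x-y)\cdot Du - u$, invoking Zhou's Jacobi inequality (Lemma \ref{jacobi1}) only at the maximum point and treating separately the region where $\lambda_{\max}\leq \delta$, where Lemma \ref{n>=3eigenvalues} already bounds $|D^2u|$. As written, your argument therefore has a wrong rescaling, a misplaced application of Savin's theorem, an Alexandrov step that does not yield a.e.\ twice differentiability of the limit, and a doubling mechanism that reintroduces the dependence the result is supposed to remove.
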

In dimension three, the result can be refined to the following:
\begin{theorem}
\label{thm1.2}
    Let $n =3$ and let $u$ be a smooth solution to the special Lagrangian equation (\ref{SLE}) in $B_2(0)$ with the phase constant $\theta \geq 0$. Suppose that $\sigma_2(D^2u) \geq (3/5-\varepsilon) \lambda_2 \lambda_3$ for some constant $\varepsilon >0$, where $\lambda_1 \geq \lambda_2 \geq \lambda_3 $ are eigenvalues of $D^2 u$, then we have 
    \begin{equation}
        |D^2u(0)| \leq C( \|u\|_{C^{0,1}(B_1(0))},\theta,\varepsilon).
    \end{equation}
\end{theorem}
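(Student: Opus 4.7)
\textbf{Proof plan for Theorem~\ref{thm1.2}.}

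I will adapt the four-step doubling framework laid out in the introduction to the refined three-dimensional constraint $\sigma_2(D^2u)\ge (\tfrac{3}{5}-\varepsilon)\lambda_2\lambda_3$.

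\emph{Step 1 (Alexandrov-type estimate via compactness).} Arguing by contradiction, I would assume a sequence $\{u_k\}$ of smooth solutions on $B_2(0)$, with phases $\theta_k\in[0,\theta_0]$, cone constraint $\sigma_2(D^2u_k)\ge(\tfrac{3}{5}-\varepsilon)\lambda_2\lambda_3$, and $\|u_k\|_{C^{0,1}(B_1(0))}\le K$, yet with $|D^2u_k(0)|\to\infty$. After subtracting the affine tangent map at $0$ and passing to a subsequence, the $u_k$ converge locally uniformly to a Lipschitz limit $u_\infty$ which is a viscosity solution of a special Lagrangian equation with limiting phase and whose eigenvalues still lie in $\overline{\Gamma_{n-1}^{(3/5)-\varepsilon}}$ (the closure passes to the limit). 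A rigidity statement for this limit (at worst $C^{1,1}$ regularity, potentially affine after a second blow-up) will yield a contradiction to the assumed Hessian blow-up. This compactness route is what replaces the Michael--Simon mean value inequality used in \cite{Zhou2021}.

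\emph{Step 2 (Small-perturbation reduction).} Once the Alexandrov-type estimate provides a uniform modulus of continuity for $Du$, I would rescale to a sufficiently small interior ball on which $u$ minus its affine tangent map is $C^0$-small, and then invoke Savin's small perturbation theorem \cite{Savin2007} to obtain an interior $C^{2,\alpha}$ bound on that small ball.

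\emph{Step 3 (Jacobi inequality and doubling).} Following Shankar \cite{Shankar2024HessianEF}, I would verify a Jacobi-type inequality $\Delta_g b\ge 0$ for $b=\log\sqrt{1+\lambda_{\max}^2}$ (or a regularised proxy) on the Lagrangian graph with its induced metric $g=I+(D^2u)^2$. The condition $\sigma_2\ge(\tfrac{3}{5}-\varepsilon)\lambda_2\lambda_3$ is designed to be the sharp algebraic threshold needed to absorb the sign-indefinite cross terms in the evolution of $b$ when $n=3$, with the slack $-\varepsilon$ providing strict coercivity. Coupled with the Savin-type bound from Step~2 on the small ball, the resulting Jacobi inequality yields a doubling estimate of the form $\sup_{B_r}|D^2u|\le C\sup_{B_{r/2}}|D^2u|$, which I then iterate from the Savin ball outward to bound $|D^2u(0)|$ by $\|u\|_{C^{0,1}(B_1(0))}$, $\theta$, and $\varepsilon$.

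The step I expect to be the main obstacle is Step~3: pinning down the sharp algebraic condition under which the pointwise Jacobi inequality persists. In the subcritical phase regime one loses the convenient sign of $\sigma_2$, so the analysis must track all three eigenvalues simultaneously, using the special Lagrangian equation to eliminate $\lambda_1$ and then invoke the cone inequality $\sigma_2\ge(\tfrac{3}{5}-\varepsilon)\lambda_2\lambda_3$ to recover nonnegativity of the key combination. Verifying that $\tfrac{3}{5}$ is precisely the critical constant, and not an artefact of a lossy estimate, is the delicate piece of algebra that determines the ultimate strength of the theorem.
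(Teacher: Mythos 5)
Your high-level scaffold (gradient/Lipschitz control, Savin small-perturbation step, Jacobi inequality, doubling iteration) matches the paper's, but the proposal misplaces where the real work is and the step you lean on in Step 1 would fail as stated. You propose to get the Alexandrov-type input by a contradiction argument in which the blow-up limit $u_\infty$ is claimed to enjoy a rigidity property (``at worst $C^{1,1}$, potentially affine after a second blow-up''). There is no justification for this: the uniform limit of the $u_k$ is merely Lipschitz, one cannot speak of eigenvalues of $D^2u_\infty$ pointwise, and below the critical phase one cannot expect rigidity of limits (cf.\ the singular examples of Mooney--Savin and Nadirashvili--Vl\u{a}du\c{t} cited in the introduction); the entire doubling scheme exists precisely to avoid needing such a statement. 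What the argument actually requires, and what constitutes the new content of the paper, is an Alexandrov-type theorem (Proposition \ref{partial_regularity}): almost-everywhere twice differentiability of the Lipschitz limit, proved by combining (i) a gradient estimate adapted from Yuan's large-phase argument (Lemma \ref{grad_est}(b), which needs Lemma \ref{n=3eigenvalues2} to handle $\lambda_2<0$), (ii) the resulting weighted-norm Lipschitz inequality, and (iii) the construction of a vector-valued Radon measure Hessian $[\mu^{ij}]$ via the dual-cone argument of Chaudhuri--Trudinger \cite{Chaudhuri_Trudinger_2005}. The delicate case is exactly the one your cone notation $\overline{\Gamma_{n-1}^{(3/5)-\varepsilon}}$ hides: under the hypothesis of Theorem \ref{thm1.2} one may have $\lambda_2<0$ (when $\varepsilon>\tfrac{3}{5}$), so the constraint is not a $\Gamma_2$-type cone condition, and the measure construction only goes through after adding the quadratic $\tfrac12\max(1,\varepsilon)|x|^2$ to restore the needed sign structure (Lemma \ref{case3b}). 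None of this is addressed in your plan.

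Conversely, the step you flag as the main obstacle — deriving the sharp Jacobi inequality under $\sigma_2\ge(\tfrac35-\varepsilon)\lambda_2\lambda_3$ — is not part of this paper at all: it is quoted from Zhou \cite{Zhou2021} (Lemma \ref{jacobi2}), and in the stronger form $\Delta_g b_m\ge(1+\alpha)|\nabla_g b_m|^2/b_m$ for $b_m$ the average of the top eigenvalues, not merely $\Delta_g b\ge 0$ for $b=\log\sqrt{1+\lambda_{\max}^2}$; the strict $\alpha>0$ and the gradient-square term are essential for the Korevaar/Guan--Qiu exponential cutoff maximum-principle argument in the doubling step (Proposition \ref{doubling2}), so your weaker inequality would not suffice. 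The doubling proof also needs the extra cases where the Jacobi inequality is unavailable ($\lambda_{\max}\le\delta$, or $\lambda_2<0$), handled by the eigenvalue bounds of Lemma \ref{n=3eigenvalues}, and it is proved independently of the Savin step rather than ``coupled'' with it: Savin's theorem is applied to the flattened approximating solutions near a point of twice differentiability of the limit, and the doubling inequality then transports that small-ball bound outward, as in \cite{Shankar2024HessianEF} Section 6.
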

Note that $\lambda_2 $ can be negative in this case (only if $\varepsilon>3/5$).

This paper follows the outline of Shankar \cite{Shankar2024HessianEF}. In our settings, Jacobi inequalities have already been established by Zhou \cite{Zhou2021}. The new contribution of this paper is the proof of the Alexandrov-type theorem. Then we can apply the doubling inequality argument developed by Shankar \cite{Shankar2024HessianEF}, with only minor modifications, to obtain the Hessian estimates.

\textbf{Outline:} The structure of the paper is as follows. In Section \ref{section preliminary}, we show some preliminary results. In particular, we show the gradient estimates for our settings in Lemma \ref{grad_est} in which we modify the proof of the gradient estimate for large phase by Yuan \cite{private_note}. In Section \ref{section partial regularity}, we establish the Alexandrov-type theorem for our settings. In Section \ref{section doubling inequality}, we establish the doubling inequalities for our settings. In fact, the argument of Shankar \cite{Shankar2024HessianEF} works with minor adjustments. In Section \ref{section main theorem proof}, we complete the proofs of Theorem \ref{thm1.1} and Theorem \ref{thm1.2}. In Section \ref{appendix}, we restate the proof of the gradient estimate of Yuan \cite{private_note}.

\section{Preliminary}
\label{section preliminary}
The following are the standard notations and results of the special Lagrangian equation. Let $u$ be a smooth solution to the special Lagrangian equation (\ref{SLE}). The metric
     \begin{equation*}
     g = (g_{ij}) = I + D^2uD^2u
     \end{equation*}
     is the induced metric on the surface $(x, Du (x)) \in \mathbb{R}^n\times \mathbb{R}^n$. 
It is well known that 
\begin{equation*}
    \Delta_g = \sum_{i,j}g^{ij}\partial_i\partial_j,
\end{equation*}
and
\begin{equation*}
    \sum_{i,j}g^{ij}u_{ijk} = 0.
\end{equation*}
Harvey-Lawson \cite{Harvey1982} showed that the graph $(x,Du(x))$ is a volume minimizing submanifold of $\mathbb{R}^n \times \mathbb{R}^n$. 

The following three lemmas describe some properties of the eigenvalues of $D^2u$, where $u$ satisfies the assumption of Theorem \ref{thm1.1} or Theorem \ref{thm1.2}.
\begin{lemma}[\cite{Zhou2021} Lemma 4.1] \label{n>=3eigenvalues}
Let $n\geq 3$ and let $\lambda_1 \geq \lambda_2 \geq \dots  \geq \lambda_n $ be $n$ real numbers such that $(\lambda_1,\dots,\lambda_n) \in \Gamma_{n-1}^{(n-2)/2}$ and $\sum_{i=1}^n \arctan \lambda_i \geq 0$. Then 
\[
\lambda_i > -\lambda_n \quad \text{for all } 1\leq i \leq n-1.
\]
A similar property holds with $\overline{\Gamma_{n-1}^{(n-2)/2}}$ instead of $\Gamma_{n-1}^{(n-2)/2}$ (with $\geq$ instead of $>$).
\end{lemma}
For the following lemma, part (a) is from \cite{Zhou2021} Lemma 3.1, and parts (b), (c) and (d) are taken from the proof of \cite{Zhou2021} Proposition 3.2.    
\begin{lemma}[\cite{Zhou2021} Lemma 3.1 and Proposition 3.2] 
\label{n=3eigenvalues}
Let $\lambda_1 \geq \lambda_2 \geq \lambda_3$ be three real numbers such that $\sigma_2(\lambda_1,\lambda_2,\lambda_3) \geq (3/5-\varepsilon)\lambda_2 \lambda_3$ for some $\varepsilon>0$ and $\arctan \lambda_1 + \arctan \lambda_2 +\arctan \lambda_3 \geq 0$. Then the following holds:
\begin{enumerate}[(a)]  
    \item If $\lambda_2 \geq 0$, then $\lambda_1 \geq \lambda_2 \geq -\lambda_3$.
    \item If $\lambda_2 < 0$, then $0 < \lambda_1 < \max(1,\varepsilon)$.
    \item If $\lambda_2 < 0$, then $|\lambda_2| \leq \lambda_1$ and $|\lambda_3| \leq \lambda_1$.
    \item If $\lambda_2 < 0$, then $\sigma_1(\lambda_1,\lambda_2,\lambda_3) \geq 0$.
\end{enumerate}
\end{lemma}

\begin{lemma}
\label{n=3eigenvalues2}
  Let $\lambda_1 \geq \lambda_2 \geq \lambda_3$ be three real numbers such that $\sigma_2(\lambda_1,\lambda_2,\lambda_3) \geq (3/5-\varepsilon)\lambda_2 \lambda_3$ for some $\varepsilon>0$ and $\arctan \lambda_1 + \arctan \lambda_2 + \arctan \lambda_3 \geq 0$. Suppose that $\lambda_2<0$, then
    \begin{equation}
        |\lambda_2| \geq \frac{|\lambda_3|}{2/5+\varepsilon} = C|\lambda_3|.
    \end{equation}   
\end{lemma}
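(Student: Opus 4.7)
The plan is to convert the eigenvalue hypotheses into a short algebraic chain. Since $\lambda_3 \leq \lambda_2 < 0$, both $\lambda_2$ and $\lambda_3$ are negative, so I will set $b = |\lambda_2|$ and $c = |\lambda_3|$, which satisfy $0 < b \leq c$ and $\lambda_2 \lambda_3 = bc$. Expanding the hypothesis $\sigma_2 \geq (3/5-\varepsilon)\lambda_2\lambda_3$ using $\sigma_2 = \lambda_1\lambda_2+\lambda_1\lambda_3+\lambda_2\lambda_3 = -\lambda_1(b+c)+bc$ and rearranging produces the first key inequality
\[
\left(\tfrac{2}{5}+\varepsilon\right) bc \;\geq\; \lambda_1(b+c).
\]

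Next I will invoke Lemma \ref{n=3eigenvalues} in its $\lambda_2<0$ branch. Part (b) of that branch gives $\sigma_1(\lambda_1,\lambda_2,\lambda_3)\geq 0$, i.e. $\lambda_1 \geq b+c$. Substituting into the previous display yields the chain
\[
\left(\tfrac{2}{5}+\varepsilon\right) bc \;\geq\; \lambda_1(b+c) \;\geq\; (b+c)^2 \;\geq\; c^{2},
\]
where the last step only uses $b+c \geq c > 0$. Dividing by $c>0$ gives $(2/5+\varepsilon) b \geq c$, which is exactly $|\lambda_2| \geq |\lambda_3|/(2/5+\varepsilon)$.

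The conceptual content is simply pairing the upper bound on $\lambda_1(b+c)$ coming from the $\sigma_2$ constraint with the lower bound $\lambda_1 \geq b+c$ coming from the non-negative-trace conclusion of Lemma \ref{n=3eigenvalues}; once both are available, the desired ratio bound falls out of $(b+c)^2 \geq c^2$. I do not anticipate a genuine obstacle: the only thing to verify is that Lemma \ref{n=3eigenvalues} really does supply $\sigma_1\geq 0$ in the regime $\lambda_2<0$, which is precisely how that lemma is formulated under the standing phase hypothesis $\sum \arctan\lambda_i \geq 0$.
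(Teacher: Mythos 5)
Your proof is correct, and it reaches the conclusion by a slightly different route than the paper. Both arguments start from the same rearrangement of the $\sigma_2$ constraint, namely $\bigl(\tfrac{2}{5}+\varepsilon\bigr)|\lambda_2||\lambda_3| \geq \lambda_1\bigl(|\lambda_2|+|\lambda_3|\bigr)$, and both lean on the $\lambda_2<0$ branch of Lemma \ref{n=3eigenvalues}; the difference is which parts of that lemma are used and how. The paper runs a case analysis on the sign of $\bigl(\tfrac{2}{5}+\varepsilon\bigr)\lambda_2+\lambda_1$, uses $\lambda_1>0$ (part (a)) to rule out the non-negative cases, deduces $|\lambda_2| > \lambda_1/\bigl(\tfrac{2}{5}+\varepsilon\bigr)$, and finishes with $|\lambda_3|\leq\lambda_1$ (part (c)). You instead invoke $\sigma_1\geq 0$ (part (b)), i.e.\ $\lambda_1 \geq |\lambda_2|+|\lambda_3|$, and close the argument with the one-line chain $\bigl(\tfrac{2}{5}+\varepsilon\bigr)bc \geq \lambda_1(b+c) \geq (b+c)^2 \geq c^2$, which avoids the case split entirely. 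Your version is a bit more streamlined (and, had you kept the intermediate inequality $\bigl(\tfrac{2}{5}+\varepsilon\bigr)b \geq b+c$, it would even give the slightly stronger bound $|\lambda_2| \geq |\lambda_3|/(\varepsilon-\tfrac{3}{5})$), while the paper's route makes explicit that the term $\bigl(\tfrac{2}{5}+\varepsilon\bigr)\lambda_2+\lambda_1$ must be negative, which is the structural fact it records. Since Lemma \ref{n=3eigenvalues} does state $\sigma_1\geq 0$ in the $\lambda_2<0$ regime, your appeal to it is legitimate and the argument is complete.
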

\begin{proof}
      Grouping the terms in $\sigma_2(D^2u) \geq (3/5-\varepsilon)\lambda_2 \lambda_3$,
    we have 
    \begin{equation*}
        ((2/5+\varepsilon)\lambda_2+\lambda_1)\lambda_3 \geq -\lambda_1 \lambda_2.
    \end{equation*}
    If $(2/5+\varepsilon)\lambda_2+\lambda_1 > 0,$ then 
    $
        \lambda_3 \geq-\frac{\lambda_1\lambda_2}{(2/5+\varepsilon)\lambda_2+\lambda_1}>0,
    $ which contradicts $\lambda_2 \geq \lambda_3$. If $(2/5+\varepsilon)\lambda_2+\lambda_1 = 0$, then $
        0 \geq - \lambda_1 \lambda_2,
    $ which contradicts $\lambda_1 >0 $ and $\lambda_2<0$.
    Hence, the only possible case is $
        (2/5+\varepsilon)\lambda_2+\lambda_1<0, 
    $ which yields  $
        \lambda_2 < \frac{-\lambda_1}{2/5+\varepsilon}.
    $
    Subsequently, by Lemma \ref{n=3eigenvalues}(c), we have 
    \begin{equation*}
        |\lambda_2| > \frac{|\lambda_1|}{2/5+\varepsilon} \geq\frac{|\lambda_3|}{2/5+\varepsilon}.
    \end{equation*}
\end{proof}
The gradient estimate for the special Lagrangian equation with large phase, i.e., $|\theta| \geq (n-2)\pi/2$, was established by Warren and Yuan \cite{a359f720-81a7-3f4b-b3f0-6e19654fb57a} (Theorem 1.3), and later refined by Yuan in his private note \cite{private_note} (Theorem 5.1.1). The gradient estimates for the settings Theorem \ref{thm1.1} and Theorem \ref{thm1.2} follow from a slight modification of Yuan's proof. The key observation is that if we have the conditions that $\lambda_n$ is the only eigenvalue that can be negative and $\lambda_i\geq |\lambda_n| $, for $1\leq i \leq n-1$, then we can obtain the same result using Yuan's method \cite{private_note}.  For the reader’s convenience, Yuan's proof is restated in Theorem \ref{grad_yuan}.

\begin{lemma}
\label{grad_est}
Let $u$ be a smooth solution to the special Lagrangian equation (\ref{SLE}) with $\theta\geq 0$ on $B_{R}(0) \subset \mathbb{R}^n$. 
\begin{enumerate} [(a)]
    \item (Gradient estimate for Theorem \ref{thm1.1}.) Suppose that $n\geq 3$ and $\lambda(D^2 u) \subset  \overline{\Gamma^{(n-2)/2}_{n-1}}$, then we have 
\begin{equation}
    |Du(0)| \leq C(n)\frac{osc_{B_{R}(0)}u}{R}.
\end{equation}
\item  (Gradient estimate for Theorem \ref{thm1.2}.) Suppose that $n = 3$ and $\sigma_2(D^2u) \geq (3/5-\varepsilon) \lambda_2 \lambda_3$ for some constant $\varepsilon >0$ where $\lambda_1 \geq \lambda_2 \geq \lambda_3 $ are eigenvalues of $D^2 u$, then we have 
\begin{equation}
    |Du(0)| \leq C(\varepsilon)\frac{osc_{B_{R}(0)}u}{R}.
\end{equation}
\end{enumerate}

\end{lemma}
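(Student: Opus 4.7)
The plan is to reduce both parts of Lemma \ref{grad_est} to Yuan's large-phase gradient estimate, whose proof is restated in the appendix as Theorem \ref{grad_yuan}. As the paragraph preceding the lemma emphasizes, Yuan's maximum-principle argument uses the eigenvalue condition only through two structural facts: at the relevant points, $\lambda_n$ is the only eigenvalue that may be negative, and $\lambda_i \geq |\lambda_n|$ for every $1 \leq i \leq n-1$. So the task reduces to verifying that our cone hypotheses supply these structural conditions, or an equally useful substitute.

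For (a), Lemma \ref{n>=3eigenvalues}(a) states exactly $\lambda_i > -\lambda_n$ for $1 \leq i \leq n-1$ on the open cone (with $\geq$ on its closure). In particular, if $\lambda_n < 0$ then $\lambda_{n-1} \geq -\lambda_n > 0$, so $\lambda_n$ is the unique possibly negative eigenvalue and $\lambda_i \geq |\lambda_n|$ for $i \leq n-1$; if instead $\lambda_n \geq 0$, the structural conditions hold trivially. Hence Yuan's argument applies verbatim and produces $|Du(0)| \leq C(n)\,\mathrm{osc}_{B_R(0)} u / R$.

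For (b), the natural split is by the sign of $\lambda_2(x_0)$ at the maximum point $x_0$ of Yuan's test function. When $\lambda_2(x_0) \geq 0$, Lemma \ref{n=3eigenvalues} (first case) gives $\lambda_2 \geq -\lambda_3$, hence $\lambda_1 \geq \lambda_2 \geq |\lambda_3|$; this is the structural condition for $n=3$, and Yuan's proof proceeds unchanged. When $\lambda_2(x_0) < 0$, Lemma \ref{n=3eigenvalues} (second case) instead yields the a priori Hessian bound $|D^2 u(x_0)| \leq C(\varepsilon)$. Since Yuan's test function is built from $|Du|$, $|D^2 u|$, and a cutoff, bounding the Hessian at the maximum caps the test function value by $C(\varepsilon)$; the desired gradient estimate then follows after rescaling to the unit ball and combining this cap with the standard comparison $|u(x) - u(0) - Du(0)\cdot x| \leq C(\varepsilon)|x|^2$.

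The principal obstacle is the $\lambda_2(x_0) < 0$ branch, which sits outside the regime for which Yuan's maximum-principle estimate was originally designed: the cone condition there loses the sign control that drives his computation, and one must exploit the a priori bound $|\lambda_i| \leq \max(1,\varepsilon)$ instead. Producing the scale-invariant form $C(\varepsilon)\,\mathrm{osc}_{B_R(0)} u / R$, rather than an additive $|D^2 u|$ term, is the place where the ``minor modification'' of Yuan's proof is concentrated.
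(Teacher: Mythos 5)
Your reduction of part (a) and of the $\lambda_2(x^*)\geq 0$ branch of part (b) to Yuan's argument matches the paper. The genuine problem is the $\lambda_2(x^*)<0$ branch. You assert that ``Yuan's test function is built from $|Du|$, $|D^2u|$, and a cutoff,'' so that the bound $|D^2u(x^*)|\leq C(\varepsilon)$ caps the test function value. That premise is false: the test function is $w=(1-|x|^2)|Du|+\frac{n}{M}u^2$, which involves $|Du|$ and $u$ but not the Hessian, so a pointwise bound on $D^2u$ at $x^*$ gives no control on $w(x^*)$ and hence no control on $|Du(0)|\leq w(0)\leq w(x^*)$. The fallback comparison $|u(x)-u(0)-Du(0)\cdot x|\leq C(\varepsilon)|x|^2$ is also unavailable: Lemma \ref{n=3eigenvalues} bounds the Hessian only at points where $\lambda_2<0$, not on the whole ball, and even a global Hessian bound would only yield $|Du(0)|\lesssim \mathrm{osc}_{B_R}u/R + C(\varepsilon)R$, with an additive term that is not of the claimed scale-invariant form $C(\varepsilon)\,\mathrm{osc}_{B_R}u/R$.

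What the paper actually does in this branch is to keep running the maximum principle at $x^*$ rather than trying to terminate the argument there. The eigenvalue bound $|\lambda_i|\leq\max(1,\varepsilon)$ is used only to get uniform ellipticity $C(\varepsilon)\leq g^{ii}=\frac{1}{1+\lambda_i^2}\leq 1$ in diagonal coordinates, and Lemma \ref{n=3eigenvalues2} supplies the comparability $|\lambda_2|\geq C|\lambda_3|$ (with $|\lambda_2|\leq|\lambda_3|$ trivially), which is what replaces the lost sign structure: it gives $\Delta_g u=\sum_\alpha\frac{u_{\alpha\alpha}}{1+u_{\alpha\alpha}^2}\geq\lambda_2+\lambda_3\geq C\lambda_k=Cu_{kk}$, and then the critical-point identity (\ref{2_4}) converts $u_{kk}$ into $-C|Du|/\eta$. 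The remaining terms in $\Delta_g w\leq 0$ are estimated as in Yuan's proof (with $g^{nn}$ replaced by the uniform ellipticity constants), yielding the same inequality $0\geq -C|Du|-\frac{C}{\eta}|Du|+\frac{C}{M}|Du|^2$ at $x^*$ and hence the gradient bound. So the ``minor modification'' lives inside the term-by-term estimates of $\Delta_g w$, not in an a priori cap on $w$; your proposal is missing this idea and, as written, the $\lambda_2<0$ case does not close.
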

\begin{proof}
(a) By Lemma \ref{n>=3eigenvalues}, $\lambda_n$ is the only eigenvalue that can be negative and $\lambda_i\geq |\lambda_n| $, for $1\leq i \leq n-1$. Note that these two conditions are the only properties of $\theta$ that Yuan used in the proof of the gradient estimate for large phase. See Theorem \ref{grad_yuan} for Yuan's proof. 

(b) The proof follows from a slight modification of Yuan's proof (Theorem \ref{grad_yuan}). As in the original proof, we consider the test function
\begin{equation*}
    w = (1-|x|^2)|Du| + \frac{n}{M}u^2.
\end{equation*}
Suppose that the maximum point $x^*$ lies on the boundary, we may use Yuan's original proof (Theorem \ref{grad_yuan}). 

Suppose that the maximum point $x^*$ lies in the interior, we may assume that there exists a $k\in \{1,\dots,n\}$ such that $u_k \geq \frac{|Du|}{\sqrt{n}}$ at $x^*$.
As in Theorem \ref{grad_yuan}, we know that $u_{kk} <0$.  If $\lambda_2(x^*) \geq 0$, then $\lambda_3$ is the only eigenvalue that can be negative. Also, by Lemma \ref{n=3eigenvalues}(a), $\lambda_i+\lambda_3 \geq 0$ for $i= 1,2$. Again, these two conditions are the only properties of $\theta$ that Yuan used in the proof of the gradient estimate for large phase. Hence, we can derive the gradient estimate. We may now assume $\lambda_2(x^*) < 0$. In this case, either $k= 2$ or $k=3$.
By Lemma \ref{n=3eigenvalues2}, we see that $|\lambda_2| \geq C|\lambda_3|$ and by definition $|\lambda_2| \leq |\lambda_3|$.
Also, notice that by Lemma \ref{n=3eigenvalues}(b) and (c), $|\lambda_i|\leq \max(1,\varepsilon)= C(\varepsilon)$.
Then using diagonal coordinates at $x^*$, we have
\begin{equation}
\label{2_1_2}
    C(\varepsilon) \leq g^{ii}= 1/(1+\lambda_i^2)\leq 1.
\end{equation}
At the maximum point $x^*$, a direct computation shows
\begin{equation}
\label{maxprin_testfunction}
    0 \geq \Delta_g w = \Delta_g \eta |Du| + 2g^{\alpha\beta} \eta_\alpha |Du|_\beta+ \eta \Delta_g |Du| + 2\frac{n}{M}g^{\alpha \beta}u_\alpha u_\beta+ 2\frac{n}{M}u\Delta_g u.
\end{equation}
We look at each term separately. We may handle the third and fourth terms exactly the same as in Yuan's proof. We will use diagonal coordinates at $x^*$. Using (\ref{2_1_2}) for the first term, we have
\begin{equation*}
    \Delta_g \eta = -2\sum_{\alpha} g^{\alpha\alpha} \geq -C.
\end{equation*}
Hence,
\begin{equation}
\label{2_6_2}
    \Delta_g \eta |Du| \geq -C|Du|.
\end{equation}
Using (\ref{2_4}), (\ref{2_1_2}) and the bound $M\leq u \leq 2M$ for the second term, we have
\begin{equation}
\label{secondterm}
\begin{split}
\sum_{\alpha,\beta}2g^{\alpha\beta} \eta_\alpha |Du|_\beta &= \sum_{\alpha}2g^{\alpha\alpha}(-2x_\alpha)(\frac{2x_\alpha|Du|+2\frac{n}{M}uu_\alpha}{\eta}) \\
&\geq -C(\sum_\alpha x_\alpha^2)\frac{|Du|}{\eta}-C\frac{|Du|}{\eta}\\
&\geq -C\frac{|Du|}{\eta}.
\end{split}
\end{equation}

Using Lemma \ref{n=3eigenvalues2}, the inequality $u_{11}>0$ and the inequality $|\lambda_2| \leq |\lambda_3|$ for the fifth term, we have
\begin{equation*}
    \Delta_g u = \sum_\alpha g^{\alpha\alpha}u_{\alpha\alpha} = \sum_\alpha \frac{u_{\alpha\alpha}}{1+u_{\alpha\alpha}^2} \geq \lambda_2 + \lambda_3 
    \geq C\lambda_k = Cu_{kk}.
\end{equation*}
Using  (\ref{2_4}) and the fact that $u_k \geq |Du|/\sqrt{n}$, we see that 
\begin{equation}
\label{fifthterm}
    \Delta_g u\geq  Cu_{kk} = -C\frac{|Du|(\eta_k|Du|+2\frac{n}{M}u u_k)}{u_k\eta} \geq -C\frac{|Du|}{\eta}.
\end{equation}
Applying the estimates (\ref{2_6_2}), (\ref{secondterm}), (\ref{thirdterm}), (\ref{fourthterm}) and (\ref{fifthterm}) to (\ref{maxprin_testfunction}), we have at $x^*$ 
\begin{equation*}
    0 \geq -C|Du|-\frac{C}{\eta}|Du|+\frac{C}{M}|Du|^2.
\end{equation*}
The rest of the proof is the same as Theorem \ref{grad_yuan}.
\end{proof}

We restate the Jacobi inequalities established by Zhou for our settings.
\begin{lemma}
\label{jacobi1}
    (Jacobi inequality for $n\geq 3).$ (\cite{Zhou2021} Lemma 4.3). 
    Consider the same setting as Theorem \ref{thm1.1}. Let $n\geq 3$ and let $u$ be a smooth solution to the special Lagrangian equation (\ref{SLE}) with $\theta\geq 0$ and  $\lambda_1 = \lambda_2 = \dots = \lambda_m = \lambda > \lambda_{m+1} $ at a point $p$. Suppose that $\lambda(D^2 u) \subset \overline{\Gamma_{n-1}^{(n-2)/2}}$, then there exist constants $\alpha,\delta>0 $ depending only on $n$ such that if $\lambda>\delta$, then $b_m:= \frac{1}{m}\sum_{i=1}^m \lambda_i$ is smooth near $p$ and 
    \begin{equation}
        \Delta_g b_m(p) \geq (1+ \alpha) \frac{|\nabla_g b_m|^2}{b_m}(p).
    \end{equation}
\end{lemma}
    \begin{lemma}
    \label{jacobi2}
        (Jacobi inequality for $n = 3$). (\cite{Zhou2021} Lemma 3.3). Consider the same setting as Theorem \ref{thm1.2}. Let $n =  3$ and let $u$ be a smooth solution to the special Lagrangian equation (\ref{SLE}) with $\theta\geq 0$ and $\lambda_1 = \lambda_2 = \dots = \lambda_m = \lambda > \lambda_{m+1} $ at a point $p$. Suppose that $\sigma_2(D^2u) \geq (3/5-\varepsilon) \lambda_2 \lambda_3$ for some constant $\varepsilon >0$, then there exist constants $\alpha,\delta>0 $ depending only on $\varepsilon$ such that if $\lambda>\delta$, then $b_m:= \frac{1}{m}\sum_{i=1}^m \lambda_i$ is smooth near $p$ and 
    \begin{equation}
        \Delta_g b_m(p) \geq (1+ \alpha) \frac{|\nabla_g b_m|^2}{b_m}(p).
    \end{equation}
    In particular, we may choose $\alpha = \min(1,\varepsilon)/16$ and $\delta = 4\min(1,\varepsilon)^{-1/2}+\max(1,\varepsilon)$.
    \end{lemma}

We restate Savin's small perturbation theorem (\cite{Savin2007} Theorem 1.3) for equations $F(M)$ depending only on the Hessian, defined on the space of $n \times n$ symmetric matrices.
\begin{theorem}
\label{savin}
    (Savin small perturbation theorem). (\cite{Savin2007} Theorem 1.3). Suppose that $F(D^2 u)$ satisfies the following properties:
    \begin{enumerate} [(a)]
        \item $F$ is elliptic,
        \item $F$ is uniformly elliptic on a $\delta$-neighborhood of $0$,
        \item $F(0)=0$,
        \item $F$ is $C^2$ and $\|D^2F\| \leq K$ in a $\delta$-neighborhood of $0$.
    \end{enumerate}
    There exists a constant $c_1$ such that, if $u$ is a viscosity solution of $F(D^2u)=0$ for $x \in B_1(0)$ with 
    \[\|u\|_{L^\infty(B_1(0))} \leq c_1,\]
    then $u \in C^{2,\alpha}(B_{1/2}(0))$, and
    \[\|u\|_{C^{2,\alpha}(B_{1/2}(0))}\leq \delta.\]
\end{theorem}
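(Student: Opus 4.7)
The plan is to establish the theorem by a compactness/improvement-of-flatness scheme, comparing $u$ with solutions of the linearized constant-coefficient operator
\[Lw := F_{ij}(0)\, w_{ij}.\]
By hypothesis (b), $L$ is uniformly elliptic, so classical Schauder theory provides interior $C^{2,\alpha}$ estimates and a quadratic Taylor expansion at the origin for any solution of $Lw=0$.

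The core step is an improvement-of-flatness lemma: there exist $r \in (0,1/2)$ and $\eta > 0$, depending only on $n$, the ellipticity constants, $K$, and $\alpha$, such that if $\|u\|_{L^\infty(B_1)} \le \epsilon \le \eta$, then one can find a quadratic polynomial $P$ with $LP=0$, with $|P(0)|,\,|DP(0)|,\,|D^2 P| \le C\epsilon$, and with $\|u - P\|_{L^\infty(B_r)} \le r^{2+\alpha}\,\epsilon$. I would prove this by contradiction: given a sequence $u_k$ with $\epsilon_k := \|u_k\|_\infty \to 0$ that violates the conclusion for every candidate $P$, rescale $v_k := u_k/\epsilon_k$, which satisfies $\|v_k\|_\infty = 1$. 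Taylor-expanding $F$ about $0$ using (c) and (d) turns $F(\epsilon_k D^2 v_k)=0$ into
\[L v_k = O\!\bigl(K\epsilon_k |D^2 v_k|^2\bigr)\]
in the viscosity sense. Krylov--Safonov H\"older estimates give precompactness of $v_k$ in $C^\alpha_{\mathrm{loc}}$, and any limit $v_\infty$ is a classical solution of $Lv_\infty=0$ with $\|v_\infty\|_\infty \le 1$. Standard Schauder theory then produces a quadratic $P_\infty$ with $\|v_\infty - P_\infty\|_{L^\infty(B_r)} \le Cr^{2+\alpha}$; choosing $r$ small so that $Cr^{2+\alpha} < \tfrac12 r^{2+\alpha}$ and setting $P := \epsilon_k P_\infty$ contradicts the assumed failure for large $k$.

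Next, I would iterate. Starting from $u_0 := u$, the lemma produces polynomials $P_k$ with $LP_k=0$, and the rescaled functions $u_{k+1}(x) := r^{-(2+\alpha)}(u_k - P_k)(rx)$ again satisfy $\|u_{k+1}\|_\infty \le \epsilon$. The partial sums $Q_k$ reconstructed from the $P_j$ assemble into a $C^{2,\alpha}$ Taylor expansion of $u$ at $0$ with geometric decay $\|u - Q_k\|_{L^\infty(B_{r^k})} \le c_1 r^{k(2+\alpha)}$. Translating the argument to every point $x_0 \in B_{1/2}$ gives pointwise $C^{2,\alpha}$ expansions at a uniform rate, hence the bound $\|u\|_{C^{2,\alpha}(B_{1/2})} \le \delta$.

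The main obstacle is keeping the iteration closed: each rescaled function $u_k$ solves an equation $F_k(D^2 u_k)=0$ with $F_k(M) := r^{-k\alpha} F\!\bigl(\textstyle\sum_{j<k} D^2 P_j + r^{k\alpha}M\bigr)$, and for the lemma to apply at stage $k+1$ one must verify that $F_k$ still satisfies (a)--(d) with the same constants. This forces $c_1$ to be chosen small enough that every partial sum $\sum_{j<k} D^2 P_j$ remains strictly inside the $\delta$-neighborhood of $0$ on which $F$ is uniformly elliptic and $C^2$; the geometric bound on $|D^2 P_k| \le C\epsilon_k$ together with $\epsilon_k \le r^{k(2+\alpha)}c_1$ ensures this, provided $c_1$ is initially small. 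This closing argument is precisely what ties the smallness hypothesis $\|u\|_\infty \le c_1$ to the statement's conclusion that $\|u\|_{C^{2,\alpha}(B_{1/2})}$ stays inside the same $\delta$ that controls the ellipticity and $C^2$ neighborhoods in (b) and (d).
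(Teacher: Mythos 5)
The paper does not prove this statement at all: it is quoted verbatim from Savin (\cite{Savin2007}, Theorem 1.3) as a black box in the preliminaries, so there is no internal proof to compare against, and your attempt has to be judged on its own merits as a proof of Savin's theorem.

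Judged that way, there is a genuine gap at the compactness step, and it is exactly the point that makes Savin's theorem hard. You pass from $F(\epsilon_k D^2 v_k)=0$ to $Lv_k = O(K\epsilon_k|D^2v_k|^2)$ by Taylor expansion and then invoke Krylov--Safonov to get $C^\alpha_{\mathrm{loc}}$ precompactness of $v_k$. But hypotheses (b) and (d) only give uniform ellipticity and the $C^2$ bound on $F$ in a $\delta$-neighborhood of the zero matrix; away from it $F$ is merely degenerate elliptic. A priori you have no information placing $D^2 u_k$ (or the Hessians of test paraboloids touching $u_k$, in the viscosity sense) inside that neighborhood --- indeed a bound on $D^2u$ is precisely what the theorem is supposed to produce --- so the Taylor expansion is not legitimate, the error term $O(K\epsilon_k|D^2v_k|^2)$ is uncontrolled (and circular), and Krylov--Safonov cannot be applied to $v_k$ because the equation they satisfy is not uniformly elliptic. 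The missing idea is the mechanism by which mere $L^\infty$-smallness of $u$ forces the solution to be touched, on a set of large measure, by paraboloids of small opening, so that the equation effectively localizes to the good neighborhood of $0$; in Savin's paper this is achieved through ABP-type measure estimates with sliding paraboloids and a Harnack-type inequality for flat solutions, and only after that does an improvement-of-flatness iteration of the kind you describe (including your correct observation about keeping $\sum_j D^2P_j$ inside the $\delta$-neighborhood) go through. Without that ingredient the scheme does not close.
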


\section{Alexandrov-type theorem}
\label{section partial regularity}
In this section, we establish the Alexandrov-type theorems for our settings. Under the assumptions of Theorem \ref{thm1.1} and Theorem \ref{thm1.2}, we cannot directly apply the classical Alexandrov theorem for convex functions, nor the Alexandrov-type theorem for $k$-convex functions, where $k>n/2$ (\cite{Chaudhuri_Trudinger_2005} Theorem 1.1). As a preliminary step, we first observe that $u$ is Lipschitz continuous. The gradient estimates of Lemma \ref{grad_est} further yield the weighted norm Lipschitz inequality. Moreover, by adapting the argument of Theorem 2.4 in \cite{Chaudhuri_Trudinger_2005}, the constraints in Theorem \ref{thm1.1} and Theorem \ref{thm1.2} are sufficient to guarantee the existence of a vector-valued Radon measure $[D^2u] = [\mu^{ij}]$, provided that $u$ is the uniform limit of smooth solutions $u_k$ satisfying the assumptions of Theorem \ref{thm1.1} and Theorem \ref{thm1.2}. Using these three properties, the argument in Section 4 of \cite{10.4007/annals.2025.201.2.4} then yields the desired result. We remark that Shankar-Yuan \cite{10.4007/annals.2025.201.2.4} approach is adapted from Evans–Gariepy \cite{EG} and Chaudhuri–Trudinger \cite{Chaudhuri_Trudinger_2005}. For the reader's convenience, we explicitly note that throughout this section, $u_k$ will always denote the sequence of smooth functions introduced in Proposition \ref{partial_regularity} below. 

\begin{proposition}
\label{partial_regularity}
Let $n\geq 3$ and let $u_k \in C^\infty(B_2(0))$ be a sequence of smooth solutions to the special Lagrangian equation (\ref{SLE}) in $B_2(0)$ with the phase constant $\theta \geq 0$ and
\[\|u_k\|_{C^{0,1}(B_1(0))} \leq A.\] 
Suppose that a subsequence of $u_k$ converges uniformly in $B_1(0)$ to a continuous function $u \in C^0(B_1(0))$, and either 
\begin{enumerate}
[(i)]
    \item  $\lambda(D^2 u_k) \subset  \overline{\Gamma^{(n-2)/2}_{n-1}}$ for all $k \in \mathbb{N}$, or
    \item   $n =3$ and there exists a constant $\varepsilon > 0$ such that 
    \[\sigma_2(D^2 u_k) \geq (3/5-\varepsilon) \lambda_2(D^2u_k)\lambda_3(D^2u_k) \text{ for all } k \in \mathbb{N},\]
\end{enumerate}
then $u$ is twice differentiable almost everywhere in $B_1(0)$, and for almost every $x \in B_1(0)$, there is a quadratic polynomial $Q$ such that
\begin{equation}
    \sup_{y\in B_r(x)} |u(y)-Q(y)| = o(r^2).
\end{equation}
\end{proposition}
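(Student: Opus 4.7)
The plan is to follow the three-part roadmap described in the paragraph preceding the statement. The idea is to establish three uniform properties of the approximating sequence $u_k$---(i) a uniform Lipschitz bound, (ii) a weighted-norm interior Lipschitz estimate, and (iii) the existence of a matrix-valued Radon measure $[D^2 u]=(\mu^{ij})$ as the weak-$*$ limit of $(\partial_i\partial_j u_k)$---and then to invoke the Alexandrov-type machinery from Section~4 of \cite{10.4007/annals.2025.201.2.4}, itself adapted from Evans--Gariepy \cite{EG} and Chaudhuri--Trudinger \cite{Chaudhuri_Trudinger_2005}, to produce the pointwise quadratic expansion at almost every point.

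Property (i) is automatic, since the uniform $C^{0,1}(B_1(0))$ bound on $u_k$ descends to $u$ under locally uniform convergence. For (ii), I would apply Lemma \ref{grad_est} to each $u_k$ on balls $B_{1-|x|}(x)\subset B_1(0)$, producing a constant $C=C(n,A,\varepsilon)$ with $(1-|x|)\,|Du_k(x)|\leq C$, and this weighted Lipschitz bound passes to $u$ in the limit.

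The main new work lies in (iii). Under the constraint $\lambda(D^2 u_k)\subset\overline{\Gamma^{(n-2)/2}_{n-1}}$, Lemma \ref{n>=3eigenvalues} implies that only $\lambda_n$ can be negative and that $\lambda_1\geq-\tfrac{n}{2}\lambda_n$; summing the eigenvalues therefore gives the pointwise bound $|\lambda_n|\leq\tfrac{2}{n-2}\Delta u_k$, and combined with $\lambda_i\geq 0$ for $i\leq n-1$, this yields $|D^2 u_k|\leq C(n)\,\Delta u_k$ pointwise. In the $n=3$ setting of Theorem \ref{thm1.2}, Lemma \ref{n=3eigenvalues} produces an analogous inequality $|D^2 u_k|\leq C(\varepsilon)\Delta u_k+C(\varepsilon)$, where the additive constant absorbs the bounded-Hessian case $\lambda_2<0$. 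Pairing against a cutoff $\eta\in C^\infty_c(B_1(0))$ and integrating by parts using the uniform Lipschitz bound gives
\begin{equation*}
    \int_{B_1(0)} \eta\,|D^2 u_k|\,dx \;\leq\; C(n,A,\varepsilon,\eta).
\end{equation*}
Banach--Alaoglu together with a diagonal argument then extracts a weak-$*$ limit $(\mu^{ij})$ of Radon measures; pairing with $\partial_i\partial_j\phi$ for $\phi\in C^\infty_c(B_1(0))$ and using $u_k\to u$ uniformly identifies $\mu^{ij}$ with the distributional second derivative $\partial_i\partial_j u$, so $[D^2 u]$ is indeed a matrix-valued Radon measure.

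With (i)--(iii) secured, the proof would conclude by importing the argument of Section~4 of \cite{10.4007/annals.2025.201.2.4}: decompose $\mu^{ij}$ into absolutely continuous and singular parts, apply Lebesgue's differentiation theorem to produce a symmetric density $A(x)$ at a.e.\ $x$, and use averaged second-order difference quotients---controlled by $[D^2 u]$ via integration by parts against the weighted Lipschitz bound from (ii)---to promote $A(x)$ into a pointwise quadratic polynomial $Q$ satisfying $\sup_{y\in B_r(x)}|u(y)-Q(y)|=o(r^2)$. The main obstacle is step (iii): once the specific geometry of the cones $\overline{\Gamma^{(n-2)/2}_{n-1}}$ and $\{\sigma_2\geq(3/5-\varepsilon)\lambda_2\lambda_3\}$ has been exploited to bootstrap from the signed trace bound on $D^2 u_k$ to the $L^1_{\mathrm{loc}}$ control needed for weak-$*$ compactness, the subsequent Alexandrov-type machinery transfers from \cite{10.4007/annals.2025.201.2.4} with no conceptual change.
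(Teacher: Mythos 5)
Your overall roadmap coincides with the paper's: uniform Lipschitz bound, a weighted interior Lipschitz estimate, existence of the matrix-valued Hessian measure, and then the Alexandrov machinery of Section 4 of \cite{10.4007/annals.2025.201.2.4}. There is, however, a genuine gap in your step (ii). The bound you propose, $(1-|x|)\,|Du_k(x)|\le C(n,A,\varepsilon)$, is already an immediate consequence of the hypothesis $\|u_k\|_{C^{0,1}(B_1(0))}\le A$, so it adds nothing to step (i), and it is not the input that the imported lemma needs. What plays the role of (4.7) in Lemma 4.2 of \cite{10.4007/annals.2025.201.2.4} is the scale-invariant estimate, valid on every ball $B_r(z)\subset B_1(0)$,
\begin{equation*}
\sup_{x\neq y\in B_r(z)} d_{x,y}^{\,n+1}\,\frac{|u(x)-u(y)|}{|x-y|}\;\le\; C\int_{B_r(z)}|u|\,dx,\qquad d_{x,y}=\min\big(\dist(x,\partial B_r),\dist(y,\partial B_r)\big),
\end{equation*}
whose essential feature is that the interior Lipschitz seminorm is controlled by the $L^1$ norm of $u$ rather than by a fixed constant; it is this $L^1$ right-hand side that carries smallness through the normalizations in the Alexandrov upgrade from an averaged second-order expansion to the pointwise statement $\sup_{B_r(x)}|u-Q|=o(r^2)$. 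A fixed Lipschitz constant cannot substitute for it: knowing only $\frac{1}{|B_{2r}|}\int_{B_{2r}}|u-Q|=o(r^2)$ together with a bounded Lipschitz constant, the standard propagation-from-the-maximum argument yields $\sup_{B_r}|u-Q|=o(r^{(n+2)/(n+1)})$, which falls short of $o(r^2)$. In the paper this estimate is obtained by combining the oscillation-form gradient estimate of Lemma \ref{grad_est} with an interpolation argument (following Corollary 3.4 and Lemma 2.6 of \cite{1c5190b3-3a64-33f8-8058-628d6f1053fe}); your proposal is missing this derivation, and without it the final appeal to Section 4 of \cite{10.4007/annals.2025.201.2.4} is not justified.

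By contrast, your step (iii) is a genuinely different route from the paper's and it is sound. The paper constructs $[\mu^{ij}]$ via the dual-cone argument of Theorem 2.4 in \cite{Chaudhuri_Trudinger_2005}, split into four cases (2-convexity for $n\ge4$; the dual cone $(\Gamma_2^{1/2})^*$ for $n=3$ under (i); and, under (ii), a separate treatment of the subcase $\lambda_2<0$ by adding the quadratic $\tfrac12\max(1,\varepsilon)|x|^2$). You instead use Lemmas \ref{n>=3eigenvalues} and \ref{n=3eigenvalues} to dominate $|D^2u_k|$ pointwise by $C(n)\,\Delta u_k$ (respectively $C(\varepsilon)\Delta u_k+C(\varepsilon)$ under (ii), where the $\lambda_2<0$ regime has a bounded Hessian and $\sigma_1\ge0$), integrate against a cutoff, integrate by parts to get uniform local $L^1$ bounds on $D^2u_k$, and extract a weak-$*$ limit identified with the distributional Hessian of $u$. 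This treats $n=3$ and $n\ge4$ under assumption (i) uniformly (the paper must argue $n=3$ separately because $\sigma_2\ge0$ may fail there) and replaces the Riesz-representation construction by a compactness argument; the trade-off is that you obtain the measures abstractly rather than through the structural positivity on the dual cone, but for the purposes of Proposition \ref{partial_regularity} that is sufficient. Repair step (ii) and your argument goes through.
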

\begin{proof}
    Without loss of generality, we may assume that $u_k$ converges to $u$ uniformly. We can immediately see that $u$ is Lipschitz:
    \begin{equation}
    \label{lip}
        \|u\|_{C^{0,1}(B_1(0))} \leq A.
    \end{equation}
    Moreover, by Rademacher's theorem, $u$ is almost everywhere differentiable. 

    By Lemma \ref{grad_est}, we have 
    \begin{equation*}
        |Du_k(0)| \leq C(n,\varepsilon) \frac{osc_{B_{R}(0)}u_k}{R}.
    \end{equation*}
    Following the argument in Corollary 3.4 of \cite{1c5190b3-3a64-33f8-8058-628d6f1053fe}, and using the interpolation argument in Lemma 2.6 of \cite{1c5190b3-3a64-33f8-8058-628d6f1053fe}, we obtain a weighted norm Lipschitz estimate for $u_k$ on $B_r(z) \subset B_1(0)$:
    \begin{equation*}
        \sup_{x,y\in B_r(z), x \neq y} d_{x,y}^{n+1}\frac{|u_k(x)-u_k(y)|}{|x-y|} \leq C \int_{B_r(z)}|u_k| dx,
    \end{equation*}
    where $d_{x,y} = \min(d_x,d_y)$ and $d_x = \dist(x,\partial B_r)$.
    Since $u_k \rightarrow u$ uniformly, we have 
    \begin{equation}
    \label{weighted_lip}
        \sup_{x,y\in B_r(z), x \neq y} d_{x,y}^{n+1}\frac{|u(x)-u(y)|}{|x-y|} \leq C(n) \int_{B_r(z)}|u| dx.
    \end{equation}

    We now show that the weak Hessian $\partial^2 u$, interpreted as a vector-valued distribution, defines a vector-valued Radon measure $[D^2u] = [\mu^{ij}]$
    \begin{equation*}
        \int_{B_1(0)} u(x) \phi_{ij}(x)dx = \int_{B_1(0)} \phi(x) d\mu^{ij}, 
    \end{equation*}
    $\forall \phi \in C^2_c(B_1(0))$. We can divide the proof into three cases: \\
    Case (1). $n \geq 4$.\\
    Case (2). $n=3$ under assumption (i) of Proposition \ref{partial_regularity}.\\
    Case (3). $n=3$ under assumption (ii) of Proposition \ref{partial_regularity}. \\
    We further split Case (3) into two cases. \\
    Case (3a). Suppose that for all $k\in \mathbb{N}$, there is no point $p\in B_1(0)$ such that $\lambda_2(D^2u_k(p))<0 $. \\ 
    Case (3b). There exist some $k \in \mathbb{N}$ and some points $p \in B_1(0)$ such that $\lambda_2(D^2u_k(p)) < 0$. \\
The existence of the vector-valued Radon measure in each case is proved in Lemma \ref{case1}, Lemma \ref{case2}, Lemma \ref{case3a} and Lemma \ref{case3b} respectively.

We have now shown the Lipschitz inequality (\ref{lip}), the weighted norm Lipschitz inequality (\ref{weighted_lip}) and the existence of a vector-valued Radon measure $[D^2u] = [\mu^{ij}]$. Combining these three results, together with the argument in \cite{10.4007/annals.2025.201.2.4} Section 4 yields Proposition \ref{partial_regularity}. We briefly outline the steps. 
The Lipschitz inequality (\ref{lip}) for $u$ implies (4.2) of \cite{10.4007/annals.2025.201.2.4}. The existence of the vector-valued Radon measure $[D^2u] = [\mu^{ij}]$ implies (4.3) and (4.4) of \cite{10.4007/annals.2025.201.2.4}. (4.2), (4.3) and (4.4) in \cite{10.4007/annals.2025.201.2.4} together imply (4.6) in \cite{10.4007/annals.2025.201.2.4} by Lemma 4.1 in \cite{10.4007/annals.2025.201.2.4}. The weighted norm Lipschitz inequality (\ref{weighted_lip}) yields (4.7) in \cite{10.4007/annals.2025.201.2.4}. Putting (4.6) and (4.7) together, we can derive the conclusion of Proposition \ref{partial_regularity} by Lemma 4.2 in \cite{10.4007/annals.2025.201.2.4}.
\end{proof}
We first consider the case $n \geq 4$, where the 2-convexity of the sequence $u_k$ ensures the existence of a vector-valued Radon measure.
\begin{lemma}
\label{case1}
    Under assumption (i) of Proposition \ref{partial_regularity}, suppose that $n\geq 4$, then there exists a vector-valued Radon  measure $[\mu^{ij}]$ such that
    \begin{equation*}
        \int_{B_1(0)} u(x) \phi_{ij}(x)dx = \int_{B_1(0)} \phi(x) d\mu^{ij}, 
    \end{equation*}
    $\forall \phi \in C^2_c(B_1(0))$. 
\end{lemma}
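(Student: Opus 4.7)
The plan is to use the cone condition to force 2-convexity of each $u_k$ (meaning $\sigma_1(D^2u_k) \geq 0$ and $\sigma_2(D^2u_k) \geq 0$ pointwise), deduce the pointwise bound $|u_{k,ij}| \leq \Delta u_k$ from the algebraic identity $\sum_{i,j} u_{k,ij}^2 = \sigma_1^2 - 2\sigma_2$, obtain a uniform $L^1_{\mathrm{loc}}(B_1)$ bound on $u_{k,ij}$ via integration by parts against a cutoff, and finally extract a weak-$*$ limit $\mu^{ij}$ and identify it with the distributional Hessian of $u$ through the uniform convergence $u_k \to u$.

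To verify 2-convexity, the nontrivial case is $\lambda_n(D^2u_k) < 0$; set $a := |\lambda_n| > 0$. Lemma \ref{n>=3eigenvalues}(a) gives $\lambda_i \geq a$ for $1 \leq i \leq n-1$, so $\sigma_1(D^2u_k) \geq (n-1)a - a = (n-2)a \geq 0$. For $\sigma_2$, note that $2\lambda_i\lambda_j \geq a(\lambda_i+\lambda_j)$ whenever $i,j \leq n-1$ (since $\lambda_i,\lambda_j \geq a$), and summing over $1 \leq i<j\leq n-1$ gives $\sigma_2(\lambda_1,\dots,\lambda_{n-1}) \geq \tfrac{(n-2)a}{2}\sigma_1(\lambda_1,\dots,\lambda_{n-1})$ via $\sum_{1\leq i<j\leq n-1}(\lambda_i+\lambda_j) = (n-2)\sigma_1(\lambda_1,\dots,\lambda_{n-1})$. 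Hence
\begin{align*}
\sigma_2(D^2u_k) &= \sigma_2(\lambda_1,\dots,\lambda_{n-1}) + \lambda_n\,\sigma_1(\lambda_1,\dots,\lambda_{n-1}) \\
&\geq \frac{(n-2)a}{2}\,\sigma_1(\lambda_1,\dots,\lambda_{n-1}) - a\,\sigma_1(\lambda_1,\dots,\lambda_{n-1}) = \frac{n-4}{2}\,a\,\sigma_1(\lambda_1,\dots,\lambda_{n-1}),
\end{align*}
which is nonnegative exactly when $n \geq 4$. The case $\lambda_n \geq 0$ is immediate since then every $\lambda_i \geq 0$.

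With 2-convexity in hand, $\sum_{i,j}u_{k,ij}^2 = \operatorname{tr}((D^2u_k)^2) = \sigma_1^2 - 2\sigma_2 \leq \sigma_1^2$, and so $|u_{k,ij}| \leq \Delta u_k$ pointwise. For any compact $K \subset B_1$ and a nonnegative cutoff $\phi \in C_c^\infty(B_1)$ with $\phi \equiv 1$ on $K$, integration by parts combined with $\|u_k\|_{C^{0,1}(B_1)} \leq A$ yields
\[
\int_K |u_{k,ij}|\,dx \leq \int_{B_1}\phi\,\Delta u_k\,dx = -\int_{B_1}Du_k\cdot D\phi\,dx \leq A\,\|D\phi\|_{L^1(B_1)},
\]
uniformly in $k$. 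Weak-$*$ compactness of Radon measures then extracts a subsequence along which $u_{k,ij}\,dx \rightharpoonup \mu^{ij}$ weak-$*$ on $B_1$. For any $\phi \in C_c^2(B_1)$, integrating by parts on the smooth side and passing to the limit using the uniform convergence $u_k \to u$ gives
\[
\int_{B_1} u\,\phi_{ij}\,dx = \lim_k \int_{B_1} u_k\,\phi_{ij}\,dx = \lim_k \int_{B_1} u_{k,ij}\,\phi\,dx = \int_{B_1}\phi\,d\mu^{ij},
\]
as required.

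The main obstacle is the borderline 2-convexity estimate at $n = 4$, where the factor $(n-4)/2$ vanishes and one must apply the off-diagonal inequality $2\lambda_i\lambda_j \geq a(\lambda_i+\lambda_j)$ to every pair $1 \leq i < j \leq n-1$ in order to avoid a negative residual. The failure of this exact computation for $n = 3$, where the factor becomes $-1/2$, is precisely why the three-dimensional setting has to be split into Lemmas \ref{case2}, \ref{case3a} and \ref{case3b}; once the pointwise bound $|u_{k,ij}| \leq \Delta u_k$ is available, the Radon measure construction is standard functional-analytic machinery.
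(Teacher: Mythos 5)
Your proof is correct, and its first half coincides with the paper's: like the paper, you reduce the lemma to showing $\sigma_1(D^2u_k)\ge 0$ and $\sigma_2(D^2u_k)\ge 0$ pointwise, using $\lambda_i\ge|\lambda_n|$ for $i\le n-1$ from Lemma~\ref{n>=3eigenvalues}. The paper checks $\sigma_2\ge 0$ by pairing specific products (the sum of $\lambda_1(\lambda_2+\dots+\lambda_{n-1})$ with $(\lambda_1+\lambda_3+\dots+\lambda_{n-1})\lambda_n$, and of $\lambda_2\lambda_3$ with $\lambda_2\lambda_n$), whereas you sum the off-diagonal inequality $2\lambda_i\lambda_j\ge a(\lambda_i+\lambda_j)$ over all pairs $i<j\le n-1$; both devices are elementary and both fail at $n=3$ for the same reason, so this difference is cosmetic. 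Where you genuinely diverge is the second half: the paper concludes by noting that the uniform limit $u$ is itself 2-convex and invoking Theorem 2.4 of Chaudhuri--Trudinger as a black box, while you build the Hessian measures directly from the smooth approximants, via the pointwise domination $|u_{k,ij}|\le\Delta u_k$ (from $\sum_i\lambda_i^2=\sigma_1^2-2\sigma_2\le\sigma_1^2$), a uniform local $L^1$ bound on $D^2u_k$ obtained by integrating $\Delta u_k$ against a cutoff and using the Lipschitz bound $A$, weak-$*$ compactness of measures (the exhaustion/diagonal step you leave implicit is routine), and identification of the limit through the uniform convergence $u_k\to u$. Your route is self-contained and sidesteps passing 2-convexity to the limit function, at the cost of relying explicitly on the smooth approximating sequence --- which is exactly what Proposition~\ref{partial_regularity} supplies --- while the paper's route leans on the same Chaudhuri--Trudinger machinery that it must develop anyway (in dual-cone form) for the three-dimensional cases in Lemmas~\ref{case2}, \ref{case3a} and \ref{case3b}.
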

\begin{proof}
    We will show that $\sigma_1(D^2u_k(x)) \geq 0 $ and $\sigma_2(D^2u_k(x)) \geq 0 $, $\forall x \in B_1(0)$, $ \forall k \in \mathbb{N}$. Then $u_k$ is 2-convex, and its uniform limit $u$ is 2-convex as well. By Theorem 2.4 in \cite{Chaudhuri_Trudinger_2005}, there exists a vector-valued Radon  measure $\mu^{ij}= \mu^{ji}$ such that      \begin{equation*}
        \int_{B_1(0)} u(x) \phi_{ij}(x)dx = \int_{B_1(0)} \phi(x) d\mu^{ij}, 
    \end{equation*}
    $\forall \phi \in C^2_c(B_1(0))$.

    For the discussion below, whenever we write $\lambda_i$, it refers to $\lambda_i(D^2u_k(x))$.  By Lemma \ref{n>=3eigenvalues}, $\lambda_i \geq |\lambda_n|$ for any $1\leq i \leq n-1$. We may assume that $\lambda_n <0$. Otherwise the claim is trivial. Hence,
    \[\sigma_1(\lambda_1,\dots,\lambda_n)
    \geq (n-2)|\lambda_n| >0.\]
    Moreover,
    \begin{equation*}
        \sigma_2(\lambda_1,\dots,\lambda_n) = (\lambda_1+\dots+\lambda_{n-1})\lambda_n + \sum_{i< j <n} \lambda_i \lambda_j.
    \end{equation*}
    Note that each term in the first sum is negative, while each term in the second sum is non-negative. 
    By Lemma \ref{n>=3eigenvalues}, the sum of $\lambda_1 (\lambda_2+\dots+ \lambda_{n-1})$ and $(\lambda_1+\lambda_3+\dots+\lambda_{n-1}) \lambda_n$ is non-negative. Also, the sum of  $\lambda_2 \lambda_3$ and $\lambda_2 \lambda_n$ is non-negative as well.
    Then we have the inequality $\sigma_2(\lambda_1,\dots,\lambda_n) \geq 0$. We observe that such an argument does not apply to the case $n = 3$ because $\lambda_3$ could be negative in such a case.
    \end{proof}

Next, we handle the case $n=3$ under assumption (i) of Proposition \ref{partial_regularity}.
\begin{lemma}
\label{case2}
    Under assumption (i) of Proposition \ref{partial_regularity}, suppose that $n=3$, then there exists a vector-valued Radon  measure $[\mu^{ij}]$ such that
    \begin{equation*}
        \int_{B_1(0)} u(x) \phi_{ij}(x)dx = \int_{B_1(0)} \phi(x) d\mu^{ij}, 
    \end{equation*}
    $\forall \phi \in C^2_c(B_1(0))$. 
\end{lemma}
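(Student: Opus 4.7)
The plan is to adapt the strategy of Lemma \ref{case1}, but direct invocation of Theorem 2.4 of \cite{Chaudhuri_Trudinger_2005} breaks down because $u_k$ need not be $2$-convex in dimension three: for instance, the triple $(\lambda_1,\lambda_2,\lambda_3)=(2,\tfrac{3}{2},-1)$ lies in $\overline{\Gamma_2^{1/2}}$ and has phase $\approx 1.30\geq 0$, yet $\sigma_2=-\tfrac{1}{2}<0$. I would bypass $2$-convexity and produce the measures $\mu^{ij}$ via a uniform $L^1_{\mathrm{loc}}$ bound on $D^2u_k$ followed by weak-$\ast$ compactness.

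The crucial pointwise inequality I would establish is $\|D^2u_k\|_{\mathrm{op}}\leq \Delta u_k$. To prove it, I would combine $\lambda_2\geq 0$ (built into the cone) with Lemma \ref{n>=3eigenvalues}(a), which gives $\lambda_2\geq -\lambda_3$ and $\lambda_1\geq -\tfrac{3}{2}\lambda_3$. These force $\lambda_1\geq 0$ and $\lambda_2+\lambda_3\geq 0$, so $\Delta u_k=\lambda_1+\lambda_2+\lambda_3\geq\lambda_1\geq 0$. Moreover, when $\lambda_3<0$ the bound $\lambda_1\geq\tfrac{3}{2}|\lambda_3|$ forces $|\lambda_3|<\lambda_1$, so the operator norm of $D^2u_k$ equals $\lambda_1$; the same is trivially true when $\lambda_3\geq 0$. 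Either way $\|D^2u_k\|_{\mathrm{op}}=\lambda_1\leq\Delta u_k$.

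Next I would combine this with the subharmonicity $\Delta u_k\geq 0$ and the uniform bound $\|u_k\|_{L^\infty(B_1(0))}\leq A$ to obtain a uniform $L^1_{\mathrm{loc}}$ bound. For $r<1$ and a non-negative cutoff $\eta\in C^2_c(B_1(0))$ with $\eta\equiv 1$ on $B_r(0)$, integrating by parts twice yields
\[
\int_{B_r(0)}\Delta u_k\,dx \leq \int_{B_1(0)}\eta\,\Delta u_k\,dx = \int_{B_1(0)} u_k\,\Delta\eta\,dx \leq A\,\|\Delta\eta\|_{L^1(B_1(0))},
\]
uniformly in $k$. Since $|(u_k)_{ij}|\leq\|D^2u_k\|_{\mathrm{op}}\leq\Delta u_k$ for all $i,j$, it follows that $\int_{B_r(0)}|(u_k)_{ij}|\,dx\leq C(r)$ uniformly in $k$.

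The conclusion then follows by weak-$\ast$ compactness of bounded signed Radon measures on $B_r(0)$, together with a diagonal argument over an exhaustion $r\uparrow 1$. Passing to the limit in the identity $\int u_k\,\phi_{ij}\,dx=\int(u_k)_{ij}\,\phi\,dx$—using uniform convergence of $u_k$ on the left and weak-$\ast$ convergence of $(u_k)_{ij}\,dx$ on the right—yields the claimed representation for every $\phi\in C^2_c(B_1(0))$. I expect the pointwise domination $\|D^2u_k\|_{\mathrm{op}}\leq\Delta u_k$ to be the crux of the argument; once it is in hand, the remainder is a routine compactness/integration-by-parts manipulation.
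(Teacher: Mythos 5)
Your proof is correct, but it takes a genuinely different route from the paper. The paper follows the Chaudhuri--Trudinger scheme: it introduces the dual cone $(\Gamma_2^{\frac{1}{2}})^*$, shows that $\int u\,\mathrm{tr}(A D^2\phi)\,dx \geq 0$ for $\phi \geq 0$ whenever $\lambda(A)$ lies in the dual cone (by integrating by parts against the smooth $u_k$ and passing to the uniform limit), verifies that the specific matrices with eigenvalues $(1,1,0),(1,0,1),(0,1,1),(1,1,1),(1+t,1-t,1)$, $0\le t\le \tfrac12$, lie in that dual cone using the same eigenvalue inequalities from Lemma \ref{n>=3eigenvalues}, and then obtains each $\mu^{ij}$ from the Riesz representation theorem for positive functionals by taking differences of the resulting nonnegative measures. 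You instead prove the pointwise domination $|(u_k)_{ij}| \leq \|D^2u_k\|_{\mathrm{op}} = \lambda_1 \leq \Delta u_k$ (valid, since $\lambda_2 \geq 0$ in the closed cone and $\lambda_2 + \lambda_3 \geq 0$, hence $\lambda_1 \geq \lambda_2 \geq |\lambda_3|$), combine it with $\Delta u_k \geq 0$ and the uniform $C^{0,1}$ bound to get a uniform $L^1_{\mathrm{loc}}$ bound on $D^2u_k$, and conclude by weak-$\ast$ compactness of measures plus a diagonal argument; the limit identity then pins down $[\mu^{ij}]$ (and any subsequential limit represents the same distribution $\partial_{ij}u$, so extracting subsequences is harmless). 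Both arguments rest on exactly the same eigenvalue facts; yours is more elementary, avoids the dual-cone bookkeeping, and gives a quantitative local mass bound $\int_{B_r}|d\mu^{ij}| \leq C(r,A)$ as a byproduct, while the paper's version works directly at the level of the limit $u$ without compactness of the Hessian measures and stays parallel to the cited Chaudhuri--Trudinger machinery reused in the other cases. Your preliminary counterexample $(2,\tfrac32,-1)$ correctly shows that $2$-convexity can fail in the three-dimensional cone, which is precisely why the paper treats $n=3$ separately from Lemma \ref{case1}.
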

    \begin{proof}        
    Define the dual cone
    \begin{equation}
        (\Gamma_2^{1/2})^*:= \{\lambda \in \mathbb{R}^3 | \text{ }\langle\lambda,\mu \rangle \geq 0 \text{ }\forall \mu \in \Gamma_2^{1/2}\}.
    \end{equation}
    Now define $T_A: C^2_c(B_1(0)) \rightarrow \mathbb{R}$ by
    \begin{equation*}
        T_A(\phi) := \int_{B_1(0)} u(x) \sum_{i,j} a^{ij} D_{ij}\phi(x) dx.
    \end{equation*}
    Here $A = a^{ij}$ is a matrix in $(\Gamma_2^{1/2})^*$. We will show that $T_A(\phi) \geq 0 $ whenever $\lambda(A) \in (\Gamma_2^{1/2})^*$ and $\phi \geq 0$. Observe that the dual cone we defined is symmetric, so the order of eigenvalues does not matter. 
    We observe that
    \begin{align*}
        &\int_{B_1(0)} u_k(x) \sum_{i,j} a^{ij}D_{ij}\phi(x) dx \\
        =&\int_{B_1(0)} \sum_{i,j} D_{ij}u_k(x)  a^{ij}\phi(x) dx \\
        &=\int_{B_1(0)} tr(A D^2u_k(x)) \phi(x) dx  \geq 0.
    \end{align*}
    Here we integrate by parts twice to obtain first equality and use the definition of $(\Gamma_2^{1/2})^*$ together with the assumption that $\phi \geq 0$ to derive the last inequality. Since $u_k$ converges to $u$ uniformly, we have 
    $T_A(\phi) \geq 0 $ for $\lambda(A) \in (\Gamma_2^{1/2})^*$ and $\phi \geq 0$. The remainder of the argument showing the existence of $\mu^{ij}$ as a Radon  measure follows exactly as in Theorem 2.4 in \cite{Chaudhuri_Trudinger_2005}, once we verified that $(1,1,0),(1,0,1),(0,1,1),(1,1,1),(1+t,1-t,1) \in (\Gamma_2^{1/2})^*$ if $0\leq t \leq 1/2$. We will rewrite \cite{Chaudhuri_Trudinger_2005} argument below.

    By the Riesz representation theorem, there exists a Radon measure $\mu^A$ defined on $B_1(0)$ such that 
    \begin{equation}
        T_A(\phi) = \int_{B_1(0)} u(x) \sum_{i,j} a^{ij} \phi_{ij}(x) = \int_{B_1(0)} \phi(x) d\mu^A,
    \end{equation}
    for all $\phi \in C^2_c(B_1(0))$. Define $A_i$ to be the diagonal matrix with all entries 1 but the ith diagonal entry to be 0. 
    We claim that $A_i \in (\Gamma_2^{1/2})^*$.
    Let $(\lambda_1,\lambda_2,\lambda_3) \in \Gamma_2^{1/2}$. By Lemma \ref{n>=3eigenvalues}, we have $\lambda_1 +\lambda_2 \geq 0 $, $\lambda_2 +\lambda_3 \geq 0$ and $\lambda_1 + \lambda_3 \geq 0$. 
    By a similar argument, we can see that $I_3\in  (\Gamma_2^{1/2})^*$, since $\sigma_1(\lambda) \geq 0 $ for $\lambda \in \Gamma_2^{1/2}$. Hence, there exist Radon  measures $I_3$ and $\mu^i$ defined on $B_1(0)$ such that 
    \begin{equation}
        \int_{B_1(0)} u(x) \sum_{j} D_{jj}\phi(x) dx = \int_{B_1(0)}\phi(x) d\mu^{I_3},
    \end{equation}
    and 
    \begin{equation}
        \int_{B_1(0)} u(x) \sum_{j\neq i} D_{jj}\phi(x) dx = \int_{B_1(0)}\phi(x) d\mu^i.
    \end{equation}
    We can define Radon  measures $\mu^{ii} := \mu^{I_3} - \mu^i$, then 
    \begin{equation}
        \int_{B_1(0)} u(x)  D_{ii}\phi(x) dx = \int_{B_1(0)}\phi(x) d\mu^{ii},
    \end{equation}
    for all $\phi \in C^2_c(B_1(0))$. 
    
    We also claim that $(1-t,1+t,1) \in (\Gamma_2^{1/2})^*$ for sufficiently small $t$, say $0\leq t \leq 1/2$. For any $\lambda  \in \Gamma_2^{1/2}$, assuming that $\lambda_1 \geq \lambda_2 \geq \lambda_3 $ where ($\lambda_1,\lambda_2 , \lambda_3$) is an arrangement of $\lambda$, we have
    $\langle \lambda, (1-t,1+t,1) \rangle \geq (1-t)\lambda_1+\lambda_2 + (1+t) \lambda_3 \geq \lambda_1/2 +\lambda_2 + 
    3\lambda_3/2 \geq 0$.
    Now fix $0 < t \leq 1/2$ and $i\neq j \in \{1,\dots,n\}$. Define $A_{ij}:= I_3 + t[e_i \otimes e_j+ e_j \otimes e_i]. $ A direct computation shows the eigenvalues of $A_{ij} $ is $(1+t,1-t,1)$.
    By definition,
    \begin{equation*}
        \sum_{k,l=1}^3a^{kl}D_{kl}\phi(x) =  \sum_{k=1}^3 D_{kk}\phi(x)+2tD_{ij} \phi(x).
    \end{equation*}
    Define Radon measures $\mu^{A_{ij}}$ by
    \begin{equation*}
        \int_{B_1(0)} \phi(x)d\mu^{A_{ij}} = \int_{B_1(0)} u(x)(D_{kk}\phi(x)+2tD_{ij} \phi(x)).
    \end{equation*}
    We have
    \begin{align*}
        \int_{B_1(0)} u(x) D_{ij} \phi(x) &= \frac{1}{2t} (\int_{B_1(0)} u(x) \sum_{k,l=1}^3 a^{kl}D_{kl}\phi(x) dx \\
        &- \int_{B_1(0)} u(x) \sum_{k=1}^3 D_{kk}\phi(x) dx) \\
        &=\frac{1}{2t}(\int_{B_1(0)}\phi(x) d\mu^{A_{ij}}- \int_{B_1(0)} \phi(x) d \mu^{I_3}) \\
        &= \int_{B_1(0)} \phi(x) d \mu^{ij},
    \end{align*}
    where we define the measure $\mu^{ij} = \frac{1}{2t}(\mu^{A_{ij}}-\sum_{k=1}^n \mu^{kk})$.
    \end{proof}
We now consider the first subcase of assumption (ii) with $n=3$, where no eigenvalue $\lambda_2(D^2 u_k)$ is negative. The argument is similar to Lemma~\ref{case2}.
\begin{lemma}
\label{case3a}
    Under assumption (ii) of Proposition \ref{partial_regularity}, suppose further that   for all $k\in \mathbb{N}$, there is no point $p\in B_1(0)$ such that $\lambda_2(D^2u_k(p))<0 $, then there exists a vector-valued Radon  measure $[\mu^{ij}]$ such that
    \begin{equation*}
        \int_{B_1(0)} u(x) \phi_{ij}(x)dx = \int_{B_1(0)} \phi(x) d\mu^{ij}, 
    \end{equation*}
    $\forall \phi \in C^2_c(B_1(0))$. 
\end{lemma}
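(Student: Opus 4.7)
The plan is to mirror the proof of Lemma \ref{case2} essentially verbatim, replacing $\overline{\Gamma_2^{1/2}}$ by the admissibility cone associated to assumption (ii) together with the subcase hypothesis $\lambda_2 \geq 0$. Define the permutation-symmetric set
\[
\Gamma := \Bigl\{\lambda \in \mathbb{R}^3 : \sigma_2(\lambda) \geq (\tfrac{3}{5}-\varepsilon)\lambda_{(2)}\lambda_{(3)},\ \textstyle\sum_i \arctan \lambda_i \geq 0,\ \lambda_{(2)} \geq 0\Bigr\},
\]
where $\lambda_{(1)} \geq \lambda_{(2)} \geq \lambda_{(3)}$ is the nonincreasing rearrangement of $\lambda$, and form its dual cone $\Gamma^\ast := \{v \in \mathbb{R}^3 : \langle v, \lambda\rangle \geq 0\ \forall\, \lambda \in \Gamma\}$. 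Because the defining conditions of $\Gamma$ are stated through the rearrangement, $\Gamma$ is invariant under coordinate permutations, which is exactly what is needed for the dual cone argument to produce $\operatorname{tr}(A\,D^2u_k)\geq 0$ from $\lambda(A)\in \Gamma^\ast$ and $\lambda(D^2u_k)\in \Gamma$ by the symmetric pairing.

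The central step is to check that the same five generators used in Lemma \ref{case2} — namely $(1,1,0)$, $(1,0,1)$, $(0,1,1)$, $(1,1,1)$, and $(1+t,1-t,1)$ for $0\leq t\leq \tfrac{1}{2}$ — all lie in $\Gamma^\ast$. Under assumption (ii) with $\lambda_2 \geq 0$, Lemma \ref{n=3eigenvalues} yields $\lambda_1 \geq \lambda_2 \geq 0$, $\lambda_2 + \lambda_3 \geq 0$ (whence $\lambda_1 + \lambda_3 \geq 0$), and $\sigma_1(\lambda) \geq \lambda_1 \geq 0$ for every sorted $\lambda \in \Gamma$; these are exactly the four linear inequalities that drove the generator verification in Lemma \ref{case2}, so the check transfers directly. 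The only slightly non-trivial pairing, $(1+t,1-t,1)$ against a sorted $\lambda$ whose $\lambda_3$ is negative, still decomposes into non-negative combinations such as $2(\lambda_1 + \lambda_3) + (\lambda_2 + \lambda_3) \geq 0$ for the worst-case permutation because $t \leq \tfrac{1}{2}$, exactly as before.

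With the generators in hand, the rest of the proof is a word-for-word transcription of the remainder of Lemma \ref{case2}. For each symmetric $A$ with $\lambda(A) \in \Gamma^\ast$, the functional $T_A(\phi) := \int_{B_1(0)} u(x) \sum_{i,j} a^{ij} D_{ij}\phi(x)\,dx$ is non-negative on non-negative $\phi \in C^2_c(B_1(0))$, obtained by integrating by parts twice against $u_k$ (producing $\int \phi \operatorname{tr}(A\,D^2u_k)\,dx \geq 0$) and passing to the uniform limit $u_k \to u$. The Riesz representation theorem then produces a non-negative Radon measure $\mu^A$ representing $T_A$, and the coordinate measures are extracted via $\mu^{ii} := \mu^{I_3} - \mu^{A_i}$ with $A_i := I_3 - e_i e_i^{T}$ (spectrum $(1,1,0)$) and $\mu^{ij} := (2t)^{-1}(\mu^{A_{ij}} - \mu^{I_3})$ with $A_{ij} := I_3 + t(e_i e_j^{T} + e_j e_i^{T})$ (spectrum $(1+t,1-t,1)$) for any fixed $t \in (0,\tfrac{1}{2}]$. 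The only substantive obstacle is the generator check in the previous paragraph, which reduces entirely to Lemma \ref{n=3eigenvalues} in the $\lambda_2 \geq 0$ regime.
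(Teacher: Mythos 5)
Your proposal is correct and takes essentially the same route as the paper: under the subcase hypothesis $\lambda_2\ge 0$, Lemma \ref{n=3eigenvalues} gives $\lambda_1+\lambda_2\ge 0$, $\lambda_1+\lambda_3\ge 0$, $\lambda_2+\lambda_3\ge 0$ (and $\sigma_1\ge 0$), so the five generators pair non-negatively with $\lambda(D^2u_k)$ and the construction of Lemma \ref{case2} runs verbatim. The only blemish is the illustrative bound for $(1+t,1-t,1)$: the worst-case pairing $(1-t)\lambda_1+\lambda_2+(1+t)\lambda_3$ is bounded below by $(1-t)(\lambda_1+\lambda_3)+(\lambda_2+\lambda_3)$ rather than by $2(\lambda_1+\lambda_3)+(\lambda_2+\lambda_3)$, a harmless slip since the correct decomposition follows from the same two inequalities.
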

\begin{proof}
    By Lemma \ref{n=3eigenvalues}(a), we have $\lambda_1+\lambda_2 \geq 0$, $\lambda_1+\lambda_3 \geq 0$ and $\lambda_2+\lambda_3 \geq 0$. Note that the inner product between any of the vectors $(1,1,0),(1,0,1),\\(0,1,1),(1,1,1),(1+t,1-t,1)$ and the vector of eigenvalues  $(\lambda_1,\lambda_2,\lambda_3)$ of $D^2u_k$ at any points $p \in B_1(0)$ will be non-negative if $0\leq t \leq 1/2$. We may use the argument of Lemma \ref{case2}.
    \end{proof}
Finally, we treat the remaining subcase of assumption (ii) with $n=3$, where some $\lambda_2(D^2 u_k)$ may be negative. A suitable modification of $u_k$ ensures the construction of the measure.    
\begin{lemma}
\label{case3b}
    Under assumption (ii) of Proposition \ref{partial_regularity}, suppose further that there exist some $k \in \mathbb{N}$ and some points $p \in B_1(0)$ such that $\lambda_2(D^2u_k(p)) < 0$, then there exists a vector-valued Radon  measure $[\mu^{ij}]$ such that
    \begin{equation*}
        \int_{B_1(0)} u(x) \phi_{ij}(x)dx = \int_{B_1(0)} \phi(x) d\mu^{ij}, 
    \end{equation*}
    $\forall \phi \in C^2_c(B_1(0))$. 
\end{lemma}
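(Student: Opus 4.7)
The strategy is to absorb the possibility $\lambda_2(D^2 u_k) < 0$ by a large quadratic correction, so that the dual-cone/Riesz scheme of Lemma \ref{case2} applies verbatim to the shifted sequence. Concretely, set $C := \max(1,\varepsilon)$ and define $v_k := u_k + C|x|^2$, $v := u + C|x|^2$. Uniform convergence $v_k \to v$ on $B_1(0)$ is immediate, and $\lambda_i(D^2 v_k) = \lambda_i(D^2 u_k) + 2C$ at every point. A vector-valued Radon measure for $D^2 v$ will transfer to one for $D^2 u$ by subtracting $2C \delta_{ij}\, \mathcal{L}^3$, since integrating $|x|^2$ by parts twice gives $\int |x|^2 \phi_{ij}\, dx = 2\delta_{ij} \int \phi\, dx$ for $\phi \in C^2_c(B_1(0))$.

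The first technical step is to verify, at every $p \in B_1(0)$, the two inequalities $\lambda_1(D^2 v_k(p)) \geq 0$ and $\lambda_2(D^2 v_k(p)) + \lambda_3(D^2 v_k(p)) \geq 0$. If $\lambda_2(D^2 u_k(p)) \geq 0$, then by the first case of Lemma \ref{n=3eigenvalues} we already have $\lambda_1, \lambda_2 \geq 0$ and $\lambda_2 + \lambda_3 \geq 0$ for $D^2 u_k$, both of which are preserved under the shift by $2CI$. If $\lambda_2(D^2 u_k(p)) < 0$, the second case of Lemma \ref{n=3eigenvalues} gives $|\lambda_i(D^2 u_k(p))| < \max(1,\varepsilon) = C$ for each $i$, so every $\lambda_i(D^2 v_k(p)) > -C + 2C = C > 0$.

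Next I mimic Lemma \ref{case2} with $u$ and $\Gamma_2^{1/2}$ replaced by $v$ and the effective cone just established. For each $A$ in the family $\{I_3\} \cup \{I_3 - e_i \otimes e_i : 1 \leq i \leq 3\} \cup \{I_3 + \tfrac{1}{2}(e_i \otimes e_j + e_j \otimes e_i) : i \neq j\}$, the trace $\mathrm{tr}(A\, D^2 v_k)$ is pointwise nonnegative: for $A = I_3$ it equals $\sigma_1(D^2 v_k) \geq 0$; for $A = I_3 - e_i \otimes e_i$, the Courant--Fischer bound $(v_k)_{ii} \leq \lambda_1(D^2 v_k)$ gives $\mathrm{tr}(A\, D^2 v_k) \geq \sigma_1(D^2 v_k) - \lambda_1(D^2 v_k) = \lambda_2(D^2 v_k) + \lambda_3(D^2 v_k) \geq 0$; and for $A = I_3 + \tfrac{1}{2}(e_i \otimes e_j + e_j \otimes e_i)$, the operator-norm estimate $|(v_k)_{ij}| \leq \|D^2 v_k\|_{\mathrm{op}} = \lambda_1(D^2 v_k)$ (the last equality using $|\lambda_3(D^2 v_k)| \leq \lambda_2(D^2 v_k)$ from the sum inequality) yields the same conclusion. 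Integrating by parts twice, $\int v_k \mathrm{tr}(A\, D^2 \phi)\, dx = \int \mathrm{tr}(A\, D^2 v_k)\, \phi\, dx \geq 0$ for $\phi \in C^2_c(B_1(0))$ with $\phi \geq 0$, and the uniform limit passes this to $v$. Riesz representation then produces Radon measures $\nu^A$, and the algebraic construction of Lemma \ref{case2} (setting $\nu^{ii} := \nu^{I_3} - \nu^{I_3 - e_i \otimes e_i}$ and $\nu^{ij} := \nu^{I_3 + \frac{1}{2}(e_i \otimes e_j + e_j \otimes e_i)} - \nu^{I_3}$ for $i \neq j$) yields signed Radon measures $\nu^{ij}$ with $\int v\, \phi_{ij}\, dx = \int \phi\, d\nu^{ij}$. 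Finally, $\mu^{ij} := \nu^{ij} - 2C \delta_{ij}\, \mathcal{L}^3$ satisfies $\int u\, \phi_{ij}\, dx = \int \phi\, d\mu^{ij}$, completing the proof.

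The main obstacle is securing the sum inequality $\lambda_2(D^2 v_k) + \lambda_3(D^2 v_k) \geq 0$ uniformly in $k$ and $p$, since $\lambda_2(D^2 u_k)$ may change sign within a single $u_k$. This is precisely what forces the quantitative choice $C = \max(1,\varepsilon)$, for which the a priori bound $|\lambda_i(D^2 u_k)| < \max(1,\varepsilon)$ in the regime $\lambda_2(D^2 u_k) < 0$ from Lemma \ref{n=3eigenvalues} is exactly large enough for the shift to restore positivity.
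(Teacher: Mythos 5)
Your proposal is correct and follows essentially the same route as the paper: shift $u_k$ by a quadratic of size $\max(1,\varepsilon)$ (justified by the eigenvalue bound of Lemma \ref{n=3eigenvalues} in the $\lambda_2<0$ regime and by $\lambda_i+\lambda_j\geq 0$ otherwise), run the construction of Lemma \ref{case2} on the shifted sequence, and subtract the explicit Hessian measure of the quadratic. The only differences are cosmetic: you use the shift $2\max(1,\varepsilon)I$ rather than $\max(1,\varepsilon)I$ and verify $\mathrm{tr}(A\,D^2v_k)\geq 0$ directly via Courant--Fischer and the operator-norm bound instead of the paper's dual-cone pairing of eigenvalue vectors.
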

\begin{proof} 
First, we observe that $3/5- \varepsilon < 0 $. If not, for any $k\in\mathbb{N}$ and $p\in B_1(0)$ such that $\lambda_2(D^2u_k(p))  < 0 $, we have
$\sigma_2(D^2u_k) \geq (3/5-\varepsilon)\lambda_2(D^2u_k) \lambda_3(D^2u_k) \geq  0$. This is impossible, since $\sigma_2(D^2u_k) \geq 0 $ and $\sigma_1(D^2u_k) \geq 0$ (by Lemma \ref{n=3eigenvalues}(d)) imply $\lambda_2(D^2u_k) + \lambda_3(D^2u_k) \geq 0 $. However, this would force \\ $\lambda_2(D^2u_k(p)) \geq 0$.
    
    By Lemma \ref{n=3eigenvalues}(b) and (c), for any $k\in\mathbb{N}$ and $p\in B_1(0)$ such that \\ $\lambda_2(D^2u_k(p))  < 0 $, we have $|\lambda_i(D^2u_k(p))| \leq \lambda_{\max}(D^2u_k(p))\leq \max(1,\varepsilon)$ for any $1\leq i\leq 3$. For all $k \in \mathbb{N}$, we define  $\Tilde{u}_k = u_k + \frac{1}{2}\max(1,\varepsilon) \sum_{i=1}^n x_i^2$ and $\Tilde{u} = u + \frac{1}{2}\max(1,\varepsilon) \sum_{i=1}^n x_i^2$.
    Note that $\Tilde{u}_k$ may not satisfy the special Lagrangian equation (\ref{SLE}), but this is not required. 

    For any $k\in\mathbb{N}$ and $p\in B_1(0)$ such that $\lambda_2(D^2u_k(p))  < 0 $, we have $\lambda_1(D^2 \Tilde{u}_k(p))\geq \lambda_2(D^2 \Tilde{u}_k(p))\geq \lambda_3(D^2 \Tilde{u}_k(p)) \geq 0.$ For these $k,p$, the inner product between any of the vectors $(1,1,0),(1,0,1),(0,1,1), (1,1,1),\\(1+t,1-t,1)$ and the vector of eigenvalues $(\lambda_1,\lambda_2,\lambda_3)$ of $D^2\Tilde{u}_k(p)$ will be non-negative if $0\leq t \leq 1/2$.

    For any $k\in \mathbb{N}$ and $p\in B_1(0)$ such that $\lambda_2 (D^2 u_k(p)) \geq 0$, by Lemma \ref{n=3eigenvalues}(a), we have $\lambda_i(D^2\Tilde{u}_k(p))+\lambda_j(D^2\Tilde{u}_k(p)) \geq \lambda_i(D^2u_k(p))+\lambda_j(D^2u_k(p)) \geq 0$ for any $i\neq j \in \{1,2,3\}$. The inner product between any of the vectors $(1,1,0),(1,0,1),(0,1,1),(1,1,1),(1+t,1-t,1)$ and the vector of eigenvalues \\$(\lambda_1,\lambda_2,\lambda_3)$ of $D^2\Tilde{u}_k(p)$ will be non-negative if $0\leq t \leq 1/2$. 
    
  Using the argument in Lemma \ref{case2}, there exists a vector-valued Radon measure $[D^2\Tilde{u}] = [\Tilde{\mu}^{ij}]$:
    \begin{equation*}
        \int_{B_1(0)} \Tilde{u}(x) \phi_{ij}(x)dx = \int_{B_1(0)} \phi(x) d\Tilde{\mu}^{ij}. 
    \end{equation*}

Note that $\frac{1}{2}\max(1,\varepsilon) \sum_{i=1}^3 x_i^2$ is a convex function. Therefore, we can find a vector-valued Radon measure for it. Subtracting such a vector-valued measure from $[\Tilde{\mu}^{ij}]$ gives a vector-valued Radon measure $[D^2u] = [\mu^{ij}]$:
    \begin{equation*}
        \int_{B_1(0)} u(x) \phi_{ij}(x)dx = \int_{B_1(0)} \phi(x) d\mu^{ij} .
    \end{equation*}

    \end{proof}

\section{Doubling inequality}
\label{section doubling inequality}
In this section, we establish the doubling inequalities for our settings. The argument in \cite{Shankar2024HessianEF} Section 4 applies here with minor modifications. First, if there exists a point $p$ such that $\lambda_{\max}(D^2u(p))$ is not sufficiently large, the Jacobi inequalities in Lemma \ref{jacobi1} and Lemma \ref{jacobi2} may not hold at $p$. However, $\lambda_{\max}(D^2u(p))$ is bounded in this case. Another difference is that, in the setting of Theorem \ref{thm1.2}, there could be two negative eigenvalues at a point $p$. However, by Lemma \ref{n=3eigenvalues}(b) and (c), $|D^2u(p)|$ is bounded in this case.
\begin{proposition}
\label{doubling1}
Let $n\geq 3$ and let $u$ be a smooth solution to the special Lagrangian equation (\ref{SLE}) in $B_2(0)$ with the phase constant $\theta \geq 0$. Suppose that $\lambda(D^2 u) \subset  \overline{\Gamma^{(n-2)/2}_{n-1}}$, then for any $y \in B_{1/2}(0)$ and $r<1/4$, we have
\begin{equation*}
    \sup_{B_{1/4}(y)} \lambda_{\max}(D^2u) \leq C \sup_{B_r(y)} \lambda_{\max}(D^2u)+C,
\end{equation*}
where $C$ is a constant depending on $r,n$ and $\|u\|_{C^{0,1}(B_1(0))}$.
\end{proposition}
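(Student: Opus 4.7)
The plan is to follow the test function argument of Section 4 of Shankar \cite{Shankar2024HessianEF}, substituting our Jacobi input from Lemma \ref{jacobi1} and tracking the dependence on $\|u\|_{C^{0,1}(B_1(0))}$ via Lemma \ref{grad_est}. Only minor modifications are needed, since our hypothesis on $\lambda(D^2u)$ yields a Jacobi inequality of exactly the same form as in Shankar's setting.

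First, I would reduce to the nontrivial case. Let $M = \sup_{B_{1/4}(y)} \lambda_{\max}(D^2u)$, attained at some $p^* \in \overline{B_{1/4}(y)}$. If $M \leq \delta$, where $\delta$ is the threshold in Lemma \ref{jacobi1}, or if $p^* \in \overline{B_r(y)}$, the conclusion is immediate via the additive constant $C$ or trivially. So assume $M > \delta$ and $p^* \in B_{1/4}(y) \setminus \overline{B_r(y)}$. Let $m$ be the multiplicity of $\lambda_{\max}$ at $p^*$; then $b_m = \frac{1}{m}\sum_{i=1}^m \lambda_i$ is smooth near $p^*$ with $b_m(p^*) = M$, and Lemma \ref{jacobi1} gives $\Delta_g b_m \geq (1+\alpha)|\nabla_g b_m|^2/b_m$ in a neighborhood of $p^*$.

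Next, I would introduce Shankar's test function on a slightly enlarged ball $B_R(y)$ with $R > 1/4$ (for instance $R = 1/2$, which is contained in $B_1(0)$ since $y \in B_{1/2}(0)$):
\[
\Phi(x) = \eta(x)^\beta\, \lambda_{\max}(x)^\gamma,
\]
to be interpreted smoothly near its interior maximum $q^*$ via the local $b_{m'}$, where $\eta$ is a cutoff supported in $B_R(y)$ with $\eta \equiv 1$ on $B_r(y)$, and $\beta, \gamma > 0$ are chosen so that the improvement factor $\alpha > 0$ absorbs the cross terms in $\Delta_g \Phi$. Using an outer ball strictly larger than $B_{1/4}(y)$ keeps $\eta(p^*)$ bounded below by a positive constant depending only on $r$. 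If $q^* \in \overline{B_r(y)}$, then $\Phi(p^*) \leq \Phi(q^*)$ combined with the explicit form of $\eta$ yields the doubling directly. Otherwise, the first-order condition $\nabla_g \Phi(q^*) = 0$ determines $\nabla_g b_{m'}(q^*)$ in terms of $\nabla_g \eta(q^*)$, and substituting this together with the Jacobi inequality into $\Delta_g \Phi(q^*) \leq 0$ produces, after algebraic rearrangement, an a priori bound $b_{m'}(q^*) \leq C(r, n, \|u\|_{C^{0,1}(B_1(0))})$, whence $M$ is controlled.

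The main obstacle is the standard exponent calibration so that the $\alpha > 0$ from the Jacobi inequality absorbs the cross terms in $\Delta_g \Phi$; this is the core mechanism of \cite{Shankar2024HessianEF} and goes through verbatim once the Jacobi inequality of Lemma \ref{jacobi1} is available. Two modifications specific to our setting arise: (i) the low-eigenvalue case $M \leq \delta$, absent from Shankar's critical/supercritical phase settings, which is disposed of by the additive $+C$; and (ii) the dependence of $C$ on $\|u\|_{C^{0,1}(B_1(0))}$, entering through the induced metric $g = I + D^2u\, D^2u$ and the bounds on $|\nabla_g \eta|_g$ and $|\Delta_g \eta|$, which rely on Lemma \ref{grad_est} to control the gradient of $u$.
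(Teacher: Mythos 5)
Your overall case structure (inner-ball/boundary case, low-eigenvalue case absorbed into the additive constant $+C$, smooth representative $b_m$ for $\lambda_{\max}$) matches the paper, but the core step --- the interior-maximum computation with $\Phi=\eta^\beta\lambda_{\max}^\gamma$ for a \emph{plain spatial} cutoff $\eta$ --- does not work, and this is exactly where the choice of cutoff is not a ``minor modification.'' At an interior maximum $q^*$ of $\eta^\beta b^\gamma$ the first-order condition gives $\gamma\,\nabla_g b/b=-\beta\,\nabla_g\eta/\eta$, and inserting the Jacobi inequality $\Delta_g b\ge(1+\alpha)\lvert\nabla_g b\rvert^2/b$ into $\Delta_g\Phi(q^*)\le 0$ yields, after the rearrangement you allude to (take $\gamma=1$ for concreteness), $0\ge \beta\,\eta\,\Delta_g\eta+(\alpha\beta^2-\beta)\lvert\nabla_g\eta\rvert_g^2$. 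Every occurrence of $b$ cancels: the resulting inequality involves only $\eta$ and the induced metric, so it can neither produce the claimed a priori bound $b_{m'}(q^*)\le C$ (the Jacobi inequality is invariant under $b\mapsto cb$ and your cutoff does not see $b$), nor be forced into a contradiction, because $\Delta_g\eta=\sum_i g^{ii}\eta_{ii}$ and $\lvert\nabla_g\eta\rvert_g^2=\sum_i g^{ii}\eta_i^2$ degenerate with $g^{ii}=1/(1+\lambda_i^2)$: if $D\eta$ happens to align with the top eigendirection then $\lvert\nabla_g\eta\rvert_g^2=O(\lambda_{\max}^{-2})$ while $\Delta_g\eta$ can be negative of order one, so no calibration of $\beta,\gamma$ against $\alpha$ helps.

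This degeneracy is precisely why the paper (following Section 5 of \cite{Shankar2024HessianEF}, not Section 4) uses the Korevaar-type exponential cutoff $\eta=(e^{(S-\varphi)/h}-1)_+$ built from the Guan--Qiu radial function $\varphi=(x-y)\cdot Du-u+u(y)-\alpha^{-1}2^\alpha|x-y|^{-2\alpha}$, and maximizes $\eta\,b_1$ over the annulus $B_{1/2}(y)\setminus B_r(y)$. With respect to $g$ the gradient of $\varphi$ is non-degenerate (in eigencoordinates its components are essentially $\lambda_i(x-y)_i/\sqrt{1+\lambda_i^2}$ plus the contribution of the singular term), so the exponential produces a positive term of size $h^{-2}\lvert\nabla_g\varphi\rvert_g^2$ that dominates once $\alpha$ and $h^{-1}$ are large, ruling out an interior maximum with $\lambda_{\max}>\delta$; the outer boundary case is trivial since $\eta$ vanishes there, the inner boundary case gives the multiplicative term, and $\lambda_{\max}\le\delta$ gives the additive constant. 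To repair your argument you must replace the spatial cutoff by this $u$-dependent cutoff (or another function whose $g$-gradient does not degenerate), at which point you are running Steps 2--3 of Shankar's Section 5 exactly as the paper does; note also that the $\|u\|_{C^{0,1}(B_1(0))}$-dependence enters through $S$ and $\|(x-y)\cdot Du-u\|_{L^\infty}$ in the cutoff rather than through Lemma \ref{grad_est}.
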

\begin{proof}
    Let $\alpha, h^{-1} \gg 1$ be two constants to be chosen later. We choose the same cutoff function $\eta(x)$ as in \cite{Shankar2024HessianEF} Section 5:
    \begin{equation}
        \eta(x) = ( e^{(S- \varphi(x))/h} - 1)_+,
    \end{equation} where 
    \begin{equation}
        \varphi = (x-y)\cdot Du - u +u(y) - \frac{\alpha^{-1}2^\alpha}{|x-y|^{2\alpha}},
    \end{equation}
    and 
    \begin{equation}
        S = -1 - \|(x-y)\cdot Du - u+u(y)\|_{L^\infty(B_{1/2}(y))} - \alpha^{-1}2^{3\alpha}.
    \end{equation}
    The choice of cutoff function is motivated by Korevaar exponential type cutoff function \cite{Korevaar1987} and Guan-Qiu \cite{GuanQiu2019} type radial derivative $(x-y)\cdot Du - u$. Observe that $S-\varphi<0$ on $\partial B_{1/2}(y)$ and $S-\varphi>0$ on $B_{1/4}(y)$ if $\alpha$ is large enough. 
    
    Let $p$ be the maximum point of $\eta(x) b_1(x)$ on $B_{1/2}(y) \backslash B_{r}(y) $ where $b_1(x)$ is defined in Lemma \ref{jacobi1}. Note that the function $b_1(x)$ is different with the function $a_1(x)$ in \cite{Shankar2024HessianEF} Section 5 because we have a different function for the Jacobi inequality. 
    
     If $p$ lies in the interior of $B_{1/2}(y) \backslash B_{r}(y)$, then there are two cases: \\
     Case (a). $\lambda_{\max}(p) \leq \delta$. \\
     Case (b). $\lambda_{\max}(p) > \delta$, 
     where $\delta$ is the constant in Lemma \ref{jacobi1}. 
     
    Case (a). The Jacobi inequality in Lemma \ref{jacobi1} does not apply in this case. However, by Lemma \ref{n>=3eigenvalues}, $|D^2u(p)|$ is already bounded. Note that we have an upper bound and a lower bound for $\eta$ depending on $r,n$ and $\|u\|_{C^{0,1}(B_1(0))}$. We obtain the desired bound 
    \begin{equation}
    \label{doublinga}
        \begin{split}
        \sup_{B_{1/4}(y)}b_1(x) & \leq \sup_{B_{1/4}(y)\backslash B_r(y)}b_1(x) + \sup_{B_{r}(y)}b_1(x) \\
        &\leq C  \sup_{B_{1/4}(y)\backslash B_r(y)}\eta(x)b_1(x) + \sup_{B_{r}(y)}b_1(x) \\
        &\leq C \sup_{B_{1/4}(y)\backslash B_r(y)}\eta(x) \delta +  \sup_{B_r(y)} b_1(x) \\
        &\leq C+ \sup_{B_r(y)} b_1(x), 
        \end{split}
    \end{equation}
    where the constant $C$ depends on $r,n$ and $\|u\|_{C^{0,1}(B_1(0))}$. Notice that the constant $C$ depends on the constants $\alpha$ and $h$ but they will be fixed in Case (b).
    
Case (b).  By Lemma \ref{jacobi1}, the Jacobi inequality holds. By Lemma \ref{n>=3eigenvalues}, we have $\lambda_i \geq  |\lambda_n|$ for $1 \leq i \leq n-1$, and $\lambda_n$ is the only eigenvalue that can be negative. We observe that the entire argument in \cite{Shankar2024HessianEF} Section 5 Steps 2 and 3 still applies because these are the only properties of the condition $\theta \geq (n-2)\pi/2$ that were used. We derive that Case (b) cannot occur if $\alpha$ and $h^{-1}$ are large enough. 

If $p$ lies on the boundary of $B_{1/2}(y) \backslash B_{r}(y) $. Since $\eta(x) = 0 $ on $\partial B_{1/2}(y)$, we have 
    \begin{equation}
    \label{doublingb}
        \sup_{B_{1/2}(y)\backslash {B_r}(y)} \eta(x) b_1(x) = \sup_{\partial{B_r}(y)} \eta(x) b_1(x) \leq C(r,n,\|u\|_{C^{0,1}(B_1(0))}) \sup_{B_r(y)}b_1(x).
    \end{equation}  
    We then derive 
    \begin{equation}
    \label{doublingc}
        \begin{split}
        \sup_{B_{1/4}(y)}b_1(x) & \leq \sup_{B_{1/4}(y)\backslash B_r(y)}b_1(x) + \sup_{B_{r}(y)}b_1(x) \\
        &\leq C  \sup_{B_{1/4}(y)\backslash B_r(y)}\eta(x)b_1(x) + \sup_{B_{r}(y)}b_1(x) \\
        &\leq C \sup_{B_r(y)} b_1(x) +  \sup_{B_r(y)} b_1(x) \\
        &\leq C \sup_{B_r(y)} b_1(x). 
        \end{split}
    \end{equation}
    Hence, either (\ref{doublinga}) or (\ref{doublingc}) holds. We can obtain the doubling inequality stated in Proposition \ref{doubling1}.

\end{proof}

\begin{proposition}
\label{doubling2}
   Let $n=3$ and let $u$ be a smooth solution to the special Lagrangian equation (\ref{SLE}) in $B_2(0)$ with the phase constant $\theta \geq 0$. Suppose that $\sigma_2(D^2u) \geq (3/5-\varepsilon) \lambda_2 \lambda_3$ for some constant $\varepsilon >0$ where $\lambda_1 \geq \lambda_2 \geq \lambda_3 $ are eigenvalues of $D^2 u$,  then for any $y \in B_{1/2}(0)$ and $r<1/4$, 
\begin{equation*}
    \sup_{B_{1/4}(y)} \lambda_{\max}(D^2u) \leq C \sup_{B_r(y)} \lambda_{\max}(D^2u)+C,
\end{equation*}
where $C$ is a constant depending on $r, \|u\|_{C^{0,1}(B_1(0))}$ and $\varepsilon$.
\end{proposition}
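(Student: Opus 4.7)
The plan is to mirror the structure of the proof of Proposition \ref{doubling1}. I set up the test function $\eta(x) b_1(x)$ using the same Korevaar-Guan-Qiu type cutoff $\eta(x)=(e^{(S-\varphi)/h}-1)_+$ from Shankar's construction, where $b_1(x)$ is the function from Lemma \ref{jacobi2}. Let $p$ denote the maximum point of $\eta(x) b_1(x)$ on $B_{1/2}(y)\setminus B_r(y)$, and distinguish whether $p$ lies on the boundary or in the interior.

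The boundary case is identical to that of Proposition \ref{doubling1}: the outer piece $\partial B_{1/2}(y)$ contributes nothing since $\eta$ vanishes there, and on $\partial B_r(y)$ the bounds on $\eta$ transfer the supremum to $\sup_{B_r(y)} b_1$, yielding the doubling estimate via the same chain of inequalities as in (\ref{doublingb})-(\ref{doublingc}).

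For the interior case, I split into three subcases rather than two. First, if $\lambda_{\max}(p)\leq \delta$ with $\delta$ the constant from Lemma \ref{jacobi2}, the Jacobi inequality is not available but $|D^2u(p)|$ is directly bounded, exactly as in Case (a) of Proposition \ref{doubling1}. Second, and this is the genuinely new subcase, if $\lambda_{\max}(p)>\delta$ but $\lambda_2(p)<0$, the Jacobi inequality may again fail; however, the second part of Lemma \ref{n=3eigenvalues} gives $|D^2u(p)|\leq \max(1,\varepsilon)=C(\varepsilon)$, so $\lambda_{\max}(p)$ is controlled and the estimate (\ref{doublinga}) applies with a constant depending additionally on $\varepsilon$. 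Third, if $\lambda_{\max}(p)>\delta$ and $\lambda_2(p)\geq 0$, then Lemma \ref{jacobi2} supplies the Jacobi inequality, and the first part of Lemma \ref{n=3eigenvalues} gives $\lambda_2\geq -\lambda_3$, whence $\lambda_1\geq \lambda_2\geq |\lambda_3|$ with $\lambda_3$ being the only eigenvalue that can be negative. These are precisely the two structural properties of the eigenvalues that were invoked in Steps 2 and 3 of \cite{Shankar2024HessianEF} Section 5, so Shankar's argument goes through verbatim and rules out this subcase for $\alpha,h^{-1}$ sufficiently large.

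The main obstacle is the third subcase, namely verifying that the structural eigenvalue inequalities furnished by the first part of Lemma \ref{n=3eigenvalues} are exactly those on which Shankar's argument relies; this verification is entirely analogous to the one already performed in Proposition \ref{doubling1}. The only essentially new feature compared to that earlier proof is the possibility that $\lambda_2(p)<0$ at the interior maximum point, and this is absorbed by the uniform bound $|D^2u(p)|\leq \max(1,\varepsilon)$ from Lemma \ref{n=3eigenvalues}. Combining the three interior subcases with the boundary case produces the claimed doubling inequality with $C=C(r,\|u\|_{C^{0,1}(B_1(0))},\varepsilon)$.
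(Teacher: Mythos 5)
Your proposal is correct and follows essentially the same route as the paper: the boundary case and the subcases $\lambda_{\max}(p)\leq\delta$ and $\lambda_{\max}(p)>\delta$ with $\lambda_2(p)\geq 0$ are handled exactly as in Proposition \ref{doubling1} (the latter via the structural facts $\lambda_2\geq-\lambda_3$, $\lambda_3$ the only possibly negative eigenvalue, from Lemma \ref{n=3eigenvalues}), while the new case $\lambda_2(p)<0$ is absorbed by the bound $|D^2u(p)|\leq\max(1,\varepsilon)$ and treated like Case (a) with $\delta$ replaced by $\max(1,\varepsilon)$. This matches the paper's proof, which only spells out the extra $\lambda_2(p)<0$ case.
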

\begin{proof}
The proof is nearly identical to that of Proposition \ref{doubling1} except we have an extra case $\lambda_2(p) < 0$. By Lemma \ref{n=3eigenvalues}(b) and (c), we can obtain the bound $|D^2u(p)| \leq \max(1,\varepsilon)$. Hence, the same argument as in Case (a) in Proposition \ref{doubling1} works after replacing the constant $\delta$ by $\max(1,\varepsilon)$.
\end{proof}
\section{Proofs of Theorems \ref{thm1.1} and \ref{thm1.2}}
\label{section main theorem proof}

\begin{proof}[Proof of Theorem \ref{thm1.1}] 
The proof of Theorem \ref{thm1.1} follows the same argument as in \cite{Shankar2024HessianEF} Section 6, except that we have an additional constant term in the doubling inequality. To clarify the doubling framework, we briefly sketch the compactness argument.

Let $u_k$ be a sequence of functions that satisfy the assumptions of Theorem \ref{thm1.1} with $\|u_k\|_{C^{1,1}(B_2(0))} \leq A$ but $|D^2u_k(0)| \rightarrow \infty$. Choose a subsequence of $u_k$ that converges uniformly in $B_1(0)$ to a continuous function $u \in C^0(B_1(0))$. \\
Step 1: Partial regularity of the limit. By the Alexandrov-type theorem (Proposition \ref{partial_regularity}), we choose a point $y \in B_{1/4}(0)$ such that $u$ is twice differentiable at $y$ and let $Q$ be the quadratic part of $u$ at $y$ such that
\begin{equation*}
    \sup_{x\in B_r(y)} |u(x)-Q(x)| \leq r^2 \sigma(r),
\end{equation*}
where $\sigma(r) = \frac{o(r^2)}{r^2}$ as $r \rightarrow 0$.
We note that $Q$ solves the special Lagrangian equation (\ref{SLE}). \\
Step 2: Flattening the error.
As in \cite{Shankar2024HessianEF}, consider the rescaled function 
\begin{equation*}
    \bar{v}_k(\bar{x}):= \dfrac{1}{r^2}\left(u_k(y+r \bar{x}) - Q(y+r \bar{x}) \right), \quad \bar{x} \in B_1(0).
\end{equation*}
By the triangle inequality, uniform convergence and the Alexandrov-type theorem (Proposition \ref{partial_regularity}), we have:
\begin{equation}
\label{error}
\begin{split}
    \|\bar{v}_k\|_{L^{\infty}(B_1(0))}&
    \le r^{-2}\|u_k(y+r\bar{x})-u(y+r\bar{x})\|_{L^{\infty}(B_1(0))} \\
    &+ \left\|\frac{u(y+r\bar{x})-Q(y+r\bar{x})} {r^2}\right\|_{L^{\infty}(B_1(0))} \\
    &\le r^{-2}o(k)/k + \sigma(r).
\end{split}
\end{equation}
Step 3: Savin stability of partial regularity.
Observe that $\bar{v}_k$ solves the fully nonlinear elliptic PDE on $B_1(0)$:
\begin{equation*}
G(D^2 \bar{v}_k) = \sum_{i=1}^{n} \left[ \arctan \lambda_i(D^2 Q + D^2 \bar{v}_k) - \arctan \lambda_i(D^2 Q) \right] = 0.
\end{equation*}
The operator $G$ satisfies the assumptions of Savin's small perturbation theorem (Theorem \ref{savin}). By \eqref{error}, we can choose a sufficiently small $r = r(\sigma)$ and a sufficiently large $k$ such that $\|\bar{v}_k\|_{L^{\infty}(B_1(0))} \leq c_1$, where $c_1$ is the constant given by Theorem \ref{savin}. Applying Theorem \ref{savin} to $\bar{v}_k$ and rescaling back, we obtain
\begin{equation}
\label{small_ball_error}
    \|u_k\|_{C^{2,\alpha}(B_r(y))} \le C(n,Q,\sigma).
\end{equation}\\
Step 4: Doubling to propagate the partial regularity.
By Proposition \ref{doubling1}, the choice of $r=r(\sigma)$ and (\ref{small_ball_error}), we have
\begin{equation*}
    \begin{split}
        \sup_{B_{1/4}(y)}|D^2u_k| &\leq C(r,n,A,\sup_{B_r(y)}|D^2u_k|) + C(r,n, A) \\
        &\leq C(r,n,A,C(n,Q,\sigma)) + C(r,n, A)\\
        &\leq C(n,A,Q,\sigma).
    \end{split}
\end{equation*}
Since $y \in B_{1/4}(0)$, we have
\begin{equation*}
    |D^2u_k(0)|\leq C(n,A,Q,\sigma).
\end{equation*}
A contradiction arises.
\end{proof}

\begin{proof} [Proof of Theorem \ref{thm1.2}] The proof of Theorem \ref{thm1.2} is exactly the same with Theorem \ref{thm1.1} except that we use the doubling inequality in Proposition \ref{doubling2} instead of the one in Proposition \ref{doubling1}. And the constant $C$ depends on $\varepsilon$ as well.
\end{proof}

\section{Appendix}
\label{appendix}
\begin{theorem}
\label{grad_yuan}
    (Gradient estimate when $|\theta| \geq (n-2)\pi/2$). (\cite{private_note} Theorem 5.1.1).
    Let $n\geq 3$. Suppose that $u$ is a smooth solution to the special Lagrangian equation (\ref{SLE}) on $B_{R}(0)$ with $|\theta| \geq (n-2)\pi/2$, then 
\begin{equation}
    |Du(0)| \leq C(n)\frac{osc_{B_{R}(0)}u}{R}.
\end{equation}

\end{theorem}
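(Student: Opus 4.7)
By rescaling I may assume $R = 1$, and by adding a constant, $M \le u \le 2M$ on $B_1(0)$ with $M := \operatorname{osc}_{B_1(0)} u$; the desired bound becomes $|Du(0)| \le C(n) M$. Following Korevaar, I would apply the interior maximum principle to the auxiliary function
\[
w(x) = \eta(x)|Du(x)| + \frac{n}{M} u(x)^2, \qquad \eta(x) := 1 - |x|^2.
\]
On $\partial B_1(0)$ we have $\eta \equiv 0$, so if $w$ attains its maximum on the boundary then $w(0) \le w|_{\partial B_1(0)} \le 4nM$, which already gives $|Du(0)| \le 4nM$. The substantive case is an interior maximum at some $x^* \in B_1(0)$.

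At $x^*$, rotate coordinates so that $D^2u(x^*)$ is diagonal with eigenvalues $\lambda_1 \ge \dots \ge \lambda_n$, and pick an index $k$ with $u_k(x^*) \ge |Du(x^*)|/\sqrt{n}$ (orienting the axis so $u_k > 0$). The first-order condition $w_k(x^*) = 0$ combined with $u \ge M$ forces $u_{kk}(x^*) < 0$ once $|Du(x^*)|$ is large compared to $M$. The large-phase hypothesis $|\theta| \ge \frac{n-2}{2}\pi$ enters the argument only through two pointwise eigenvalue inequalities: $\lambda_n$ is the only eigenvalue that can be negative, and $\lambda_i \ge |\lambda_n|$ for $1 \le i \le n-1$. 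Hence $k$ must equal $n$ and $u_{kk}(x^*) = \lambda_n(x^*)$. I would then expand $0 \ge \Delta_g w(x^*)$ in the diagonal coordinates and bound each term: $(\Delta_g\eta)|Du| \ge -2n|Du|$ since $g^{\alpha\alpha}\le 1$; the cross term $2 g^{\alpha\beta}\eta_\alpha|Du|_\beta$ is $\ge -C|Du|/\eta$ after substituting the first-order condition together with $|Du|_\alpha = u_\alpha\lambda_\alpha/|Du|$; the term $\eta\Delta_g|Du|$ is non-negative by the Bochner-type identity for $|Du|$, the linearized SLE identity $g^{\alpha\beta}u_{\alpha\beta k}=0$, and Cauchy--Schwarz; the self-gradient term $\frac{2n}{M} g^{\alpha\beta} u_\alpha u_\beta$ is trivially non-negative. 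Finally, $\Delta_g u = \sum_\alpha \lambda_\alpha/(1+\lambda_\alpha^2)$ is bounded below by pairing the contribution of $\lambda_1$ with that of $\lambda_n$ via $\lambda_1 \ge |\lambda_n|$ and then substituting $\lambda_n = u_{kk}$ from the first-order identity, yielding $\Delta_g u \ge -C|Du|/\eta$.

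Combining these bounds and using $u \ge M$ gives at $x^*$
\[
0 \ge -C|Du(x^*)| - \frac{C|Du(x^*)|}{\eta(x^*)} + \frac{C}{M}|Du(x^*)|^2,
\]
which yields $\eta(x^*)|Du(x^*)| \le CM$. Since $\eta(0) = 1$ and $w(0) \le w(x^*)$, we conclude $|Du(0)| \le w(0) \le w(x^*) \le C(n)\operatorname{osc}_{B_1(0)} u$. \textbf{The main obstacle} is the lower bound on $\Delta_g u$: in general the sign of $\sum_\alpha \lambda_\alpha/(1+\lambda_\alpha^2)$ is ambiguous, but the structural inequalities $\lambda_i \ge |\lambda_n|$ coming from the large-phase hypothesis let one absorb the single negative contribution of $\lambda_n$ and then, via the identity $u_{kk}=\lambda_n$, recast it as a term of size $|Du|/\eta$ that is ultimately dominated by the quadratic $|Du|^2/M$; everything else is routine bookkeeping with the maximum principle.
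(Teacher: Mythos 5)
Your overall skeleton matches the paper's proof (Korevaar-type test function $w=\eta|Du|+\tfrac nM u^2$, boundary/interior dichotomy, first-order condition forcing $u_{kk}<0$, identification of $u_{kk}$ with $\lambda_n$ via the large-phase eigenvalue facts, then term-by-term estimation of $\Delta_g w$). However, there is a genuine gap in the heart of the argument: you discard the term $\tfrac{2n}{M}g^{\alpha\beta}u_\alpha u_\beta$ as ``trivially non-negative,'' yet this term is the only source of the good quadratic term in your final display. In the paper's proof one keeps it and uses $k=n$ together with $u_n\ge |Du|/\sqrt n$ to get
\begin{equation*}
\frac{2n}{M}\sum_\alpha g^{\alpha\alpha}u_\alpha^2 \;\ge\; \frac{2n}{M}g^{nn}u_n^2 \;\ge\; \frac{2}{M}\,g^{nn}|Du|^2,
\end{equation*}
and it is exactly this term that absorbs the negative contributions. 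With your bounds (first term $\ge -2n|Du|$, cross term $\ge -C|Du|/\eta$, third term $\ge 0$, fourth term $\ge 0$, fifth term $\ge -C|Du|/\eta$) the conclusion $0\ge -C|Du|-C|Du|/\eta+\tfrac CM|Du|^2$ does not follow; the positive $\tfrac CM|Du|^2$ appears from nowhere, and the inequality you actually obtain is vacuous. (Your appeal to ``$u\ge M$'' at that point also goes the wrong way: for the fifth term $\tfrac{2n}{M}u\,\Delta_g u$ with $\Delta_g u$ possibly negative you need the upper bound $u\le 2M$; the lower bound $u\ge M$ is used earlier, to show $u_{kk}<0$.)

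There is a second, related issue: even after reinstating the fourth term, your bound $\Delta_g\eta\ge -2n$ via $g^{\alpha\alpha}\le 1$ (and similarly for the cross term) loses the common factor $g^{nn}$. The positive term carries $g^{nn}=1/(1+\lambda_n^2)$, which is not bounded below a priori (no Hessian bound is available at this stage), so an inequality of the form $0\ge -C|Du|-\tfrac C\eta|Du|+\tfrac CM g^{nn}|Du|^2$ only yields $\eta\,g^{nn}|Du|\le CM$, which is useless. The paper avoids this by using $|\lambda_n|\le\lambda_\alpha$ to get $g^{\alpha\alpha}\le g^{nn}$, so that \emph{every} term in $\Delta_g w$ is bounded with the same factor $g^{nn}$, which is then divided out before absorbing; you need this uniform factoring (or some substitute) for the absorption step to close.
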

\begin{proof}
    We may assume that $R= 1 $ by rescaling $u(Rx)/R^2$. Note that
\[D^2(\frac{u(Rx)}{R^2}) = D^2u(Rx).\]
Then $\frac{u(Rx)}{R^2}$ satisfies the special Lagrangian equation with the same phase constant $\theta$.
Suppose we have the estimate 
\begin{equation*}
    |D(\frac{u(R  x)}{R^2})(0)| \leq C(n)osc_{B_1(0)}(\frac{u(Rx)}{R^2}),
\end{equation*}
then we can simplify the equation and obtain the desired estimate. We may also assume $\theta \geq (n-2)\pi/2$ by symmetry.

Let \[M:=osc_{B_1(0)} u.\]
We may assume $M>0$. We may replace $u$ by $\Tilde{u}:= u-\min_{B_1(0)}u+M$. We then have $M\leq u \leq 2M$ in $B_1(0)$.
Consider the test function
\begin{equation*}
    w = (1-|x|^2)|Du| + \frac{n}{M}u^2.
\end{equation*}
Write $\eta := 1-|x|^2$.
Suppose that the maximum of $w$ lies on the boundary, say at $x^*$ where $|x^*| = 1$, then
\begin{equation*}
    2nM \geq w(x^*) = \frac{n}{M}u^2(x^*) \geq w(0) = |Du(0)|+\frac{n}{M}u^2(0) \geq |Du(0)|.
\end{equation*}

If the maximum point $x^*$ lies in the interior, we may assume $|Du(x^*)|>0$, since otherwise we can obtain the desired bound. We may further assume there exists a $k \in{\{1,\dots,n\}}$ such that $u_k \geq \frac{|Du|}{\sqrt{n}}$ at $x^*$. If not, we may consider the function \[\Tilde{u}(x_1,\dots,x_k,\dots,x_n) = u(x_1,\dots,-x_k,\dots,x_n),\] which satisfies the special Lagrangian equation with the same phase constant $\theta$. At $x^*$, we have
\begin{equation}
\label{2_3}
    0 = w_i = \eta|Du|_i+\eta_i|Du|+2\frac{n}{M}uu_i.
    \end{equation}
A direct computation gives
\begin{equation*}
    |Du|_i = \frac{\sum_j u_j u_{ij}}{|Du|}.
\end{equation*}
Assume that $D^2u$ is diagonalized at $x^*$, we have
\begin{equation}
\label{2_4}
    \frac{u_i u_{ii}}{|Du|} = |Du|_i = -\frac{2\eta_i|Du|+2uu_i\frac{n}{M}}{\eta}.
\end{equation}
In particular, for the $k\in \{1,\dots,n\}$ chosen just now, 
\begin{align*}
     2\eta_k|Du|+2uu_k\frac{n}{M}&\geq -2|Du|+2u u_k \frac{n}{M}\\
     &\geq -2|Du| + 2M\frac{|Du|}{\sqrt{n}}{\frac{n}{M}} =2(\sqrt{n}-1)|Du| >0.
\end{align*}
Using (\ref{2_4}) and the fact that $u_k >0 $,
we have $\lambda_k = u_{kk} <0 $. Also, by Lemma 2.1 of \cite{Wang2014}, $|\lambda_n| \leq \lambda_i$ for $i \in \{1,\dots,n-1\}$. Hence, we have $\lambda_k = \lambda_n$.

At the maximum point $x^*$, we have
\begin{equation}
\label{2_5}
    0 \geq \Delta_g w = \Delta_g \eta |Du| + 2g^{\alpha\beta} \eta_\alpha |Du|_\beta+ \eta \Delta_g |Du| + 2\frac{n}{M}g^{\alpha \beta}u_\alpha u_\beta+ 2\frac{n}{M}u\Delta_g u.
\end{equation}
We look at each term separately. We will use diagonal coordinates at $x^*$. Consider the first term, we have
\begin{equation*}
    \Delta_g \eta = -2\sum_\alpha g^{\alpha\alpha} \geq -2ng^{nn},
\end{equation*}
where we use the fact that $|\lambda_n|\leq \lambda_i$ for any $1 \leq i \leq n-1$. Hence, 
\begin{equation}
\label{2_6}
    \Delta_g \eta |Du| \geq -2ng^{nn}|Du|.
\end{equation}
Using (\ref{2_4}), the inequality $|\lambda_n|\leq \lambda_i$ for $1\leq i \leq n-1$, and the bound $M\leq u \leq 2M$ for the second term, we have
\begin{equation}
\label{secondterm2}
\begin{split}
\sum_{\alpha,\beta}2g^{\alpha\beta} \eta_\alpha |Du|_\beta &= \sum_\alpha2g^{\alpha\alpha}(-2x_\alpha)(\frac{2x_\alpha|Du|+2\frac{n}{M}uu_\alpha}{\eta}) \\
&\geq -8g^{nn}(\sum_\alpha x_\alpha^2)\frac{|Du|}{\eta}-8n^2 g^{nn}\frac{|Du|}{\eta}\\
&\geq -8g^{nn}\frac{|Du|}{\eta}-8n^2 g^{nn}\frac{|Du|}{\eta}.
\end{split}
\end{equation}
Directly computing the third term, we have
\begin{align*}
    \Delta_g |Du| &= \sum_{\alpha,\beta} g^{\alpha\beta}\partial_\alpha\partial_\beta|Du|\\
    &= \sum_{\alpha,\beta} g^{\alpha \beta} (\frac{\sum_i u_{i\alpha}u_{\beta i}-\sum_i u_{\alpha\beta i}u_i}{|Du|}-\frac{\sum_i u_i u_{\beta i} \sum_j u_j u_{\alpha j}}{|Du|^3}) \\
    &= \sum_{\alpha,\beta} g^{\alpha \beta} (\frac{\sum_i u_{i\alpha}u_{\beta i}}{|Du|}-\frac{\sum_i u_i u_{\beta i} \sum_j u_j u_{\alpha j}}{|Du|^3}),
\end{align*}
where we use the fact that $\sum_{\alpha,\beta}g^{\alpha\beta}u_{\alpha\beta i} = 0$.
Suppose that, at the point $x^*$, $D^2u$ is diagonal, then
\begin{align*}
    \Delta_g |Du| &= \sum_{\alpha} g^{\alpha \alpha} \frac{ u_{\alpha\alpha}^2|Du|^2- u_\alpha^2 u_{\alpha \alpha}^2 }{|Du|^3} \geq 0.
\end{align*}
Hence, 
\begin{equation}
\label{thirdterm}
    \eta \Delta_g |Du| \geq 0.
\end{equation}
Recall that $u_n \geq \frac{|Du|}{\sqrt{n}}$ at $x^*$ and $|\lambda_n| \leq \lambda_i$ for $i \in \{1,\dots,n-1\}$, we can estimate the fourth term by 
\begin{equation}
\label{fourthterm}
    \sum_\alpha\frac{2n}{M}g^{\alpha\alpha}u_\alpha^2 \geq \frac{2n}{M}g^{nn}u_n^2 \geq \frac{2n}{M}g^{nn}\frac{|Du|^2}{n} = \frac{2}{M}g^{nn}|Du|^2.
\end{equation}
For the fifth term, using the fact that $\lambda_i\geq 0$ for $0\leq i\leq n-1$ and (\ref{2_4}), we have
\begin{equation}
\label{fifthterm2}
\begin{split}
    \Delta_g u=  \sum_\alpha g^{\alpha\alpha}u_{\alpha\alpha} \geq g^{nn}u_{nn}= -g^{nn}\frac{|Du|(\eta_n|Du|+2\frac{n}{M}u u_n)}{u_n\eta} \geq -g^{nn}\frac{6n|Du|}{\eta}.
\end{split}
\end{equation}
Applying the estimates (\ref{2_6}), (\ref{secondterm2}), (\ref{thirdterm}), (\ref{fourthterm}) and (\ref{fifthterm2}) to (\ref{2_5}), we have 
\begin{equation*}
    0 \geq g^{nn}(-C|Du|-\frac{C}{\eta}|Du|+\frac{C}{M}|Du|^2).
\end{equation*}
Hence, we have
\begin{equation*}
    \eta(x^*)|Du(x^*)| \leq CM,
\end{equation*}
and
\begin{equation*}
    |Du(0)| \leq w(0) \leq w(x^*) = \eta|Du(x^*)|+\frac{n}{M}u^2(x^*) 
    \leq CM.
\end{equation*}
\end{proof}
\section*{Acknowledgements}
The author would like to thank Yu Yuan for granting permission to use the gradient estimate from his private notes, and Yi Wang for her valuable guidance and supervision. The author also thanks the anonymous referee for helpful comments.
\bibliographystyle{plainnat}
\bibliography{refs}

\end{document}